\documentclass[headsepline,12pt,a4paper]{amsart}
\usepackage{amsmath,amssymb,amsthm,exscale,calc}
\usepackage[latin1]{inputenc}
\usepackage{latexsym,textcomp}
\usepackage{graphicx}
\usepackage{parskip}
\usepackage{floatflt}
\usepackage{picins}
\theoremstyle{definition}
\newtheorem{Thm}{Theorem}

\newtheorem{Lemma}{Lemma}

\newtheorem{Proposition}{Proposition}

\newtheorem*{Remark}{Remark}
\newtheorem*{Example}{Examples}

\newcommand{\Z}{{\mathbb Z}}
\newcommand{\R}{{\mathbb R}}
\newcommand{\C}{{\mathbb C}}
\newcommand{\Q}{{\mathbb Q}}
\newcommand{\N}{{\mathbb N}}

\newcommand{\D}{{\mathbb D}}
\newcommand{\Sp}{{\mathbb S}}

\newcommand{\Lp}{{\Delta}}
\newcommand{\LF}{\widetilde{\Delta}}
\newcommand{\se}{\sigma_{\!\mathrm{ess}}}

\newcommand{\inn}{{\mathrm {int}}}
\newcommand{\dd}{{\partial}}
\newcommand{\al}{{\alpha}}
\newcommand{\be}{{\beta}}
\newcommand{\de}{{\delta}}
\newcommand{\vareps}{{\varepsilon}}
\newcommand{\eps}{{\varepsilon}}
\newcommand{\ka}{{\kappa}}
\newcommand{\ph}{{\varphi}}
\newcommand{\lm}{{\lambda}}
\newcommand{\gm}{{\gamma}}
\newcommand{\si}{{\sigma}}

\newcommand{\qand}{{\quad\mathrm {and}\quad}}

\newcommand{\av}[1]{\left\vert #1\right\vert}

\newcommand{\ab}[1]{\left( #1\right)}
\newcommand{\ac}[1]{\left\{ #1\right\}}

\newcommand{\ov}[1]{\overline{#1}}

\newcommand{\Hm}[1]{\leavevmode{\marginpar{\tiny%
$\hbox to 0mm{\hspace*{-0.5mm}$\leftarrow$\hss}%
\vcenter{\vrule depth 0.1mm height 0.1mm width \the\marginparwidth}%
\hbox to 0mm{\hss$\rightarrow$\hspace*{-0.5mm}}$\\\relax\raggedright
#1}}}

\sloppy
\begin{document}

\title[Curvature, geometry and spectrum of
planar graphs]
{Curvature, geometry and spectral properties of
planar graphs}
\author[Matthias~Keller]{Matthias Keller}
\address[Matthias~Keller]{Mathematical institute, Friedrich-Schiller-University Jena, D-07743 Jena, Germany}
\email{m.keller@uni-jena.de}

\def\abstractname{Abstract}
\begin{abstract}
We introduce a curvature function for planar graphs to study the connection between the curvature and the geometric and spectral properties of the graph. We show that non-positive curvature implies that the graph is infinite and locally similar to a tessellation. We use this to extend several results known for tessellations to general planar graphs. For non-positive curvature, we show that the graph admits no cut locus and we give a description of the boundary structure of distance balls. For negative curvature, we prove that the interiors of minimal bigons are empty and derive explicit bounds for the growth of distance balls and Cheeger's constant. The latter are used to obtain lower bounds for the bottom of the spectrum of the discrete Laplace operator. Moreover, we give a characterization for triviality of essential spectrum by uniform decrease of the curvature. Finally, we show that non-positive curvature implies absence of finitely supported eigenfunctions for nearest neighbor operators.
\end{abstract}

\maketitle
\thispagestyle{empty}
\section{Introduction}\label{s:Intro}
There is a long tradition of studying planar graphs, i.e., graphs that can be embedded in a topological surface homeomorphic to $\R^2$ without self intersection. In particular, the curvature of  tessellating graphs received a great deal of attention during the recent years, see for instance \cite{BP1,BP2,CC,DM,Fo,H,K,KP,S1,SY,W,Z} and references therein. Here, we introduce a notion of curvature for general planar graphs to study their geometry and spectral properties. In the case of tessellations, which are also called tilings, the definition coincides with the one of \cite{BP1,BP2,H,S1,W}.

We next summarize and discuss the main results of the paper. Precise formulations can be found in the sections, where the results are proven.

The key insight of this paper is that non-positive curvature alone
has already very strong implications on the structure of a planar graph, i.e.:
\begin{itemize}
\item[(1.)] The graph is infinite and locally tessellating (Theorem~\ref{t:nonpos} in Section~\ref{s:NonPos}).
\item[(2.)] The graph is locally similar to a tessellation (Theorem~\ref{t:embedding} in Section~\ref{s:LocTessVsTess}).
\end{itemize}
Let us comment on these points. In contrast to the statement about the infinity, positive curvature implies finiteness of the graph. This was studied in \cite{CC,DM,H,S1,SY}.
Note also that locally tessellating graphs, recently introduced in \cite{KP} (see also \cite{Z}), allow for a unified treatment of planar tessellations and trees. The idea of (2.) is to construct an embedding  of a locally tessellating graph into a tessellation that preserves crucial properties of a fixed finite subset. This embedding allows us to carry over many  results known for tessellations to planar graphs. Indeed, most of the results are then direct corollaries of (1.), (2.) and the corresponding results for tessellations. In particular, we obtain for planar graphs with non-positive curvature:
\begin{itemize}
\item[(3.)] Absence of cut locus, i.e., every distance minimizing path can be continued to infinity (Theorem~\ref{t:no_cut_locus} in Section~\ref{s:no_cut_locus}).
\item[(4.)] A description of the boundary  of distance balls (Theorem~\ref{t:admissiblity} in Section~\ref{s:locstructure}).
\end{itemize}
For regular tessellations a result similar to (3.) was obtained by Baues, Peyerimhoff in \cite{BP1} and was later extended to general tessellations by the same authors in \cite{BP2}. This result is considered  a discrete analog of the Hadamard-Cartan theorem in differential geometry. Questions about the possible depth of the cut locus for Cayley graphs were studied under the name dead-end-depth in \cite{CR}, see also references therein. The boundary structure of distance balls, mentioned in (4.), plays a crucial role in the techniques of $\dot{\mbox{Z}}$uk, \cite{Z}. These concepts were later refined by \cite{BP1,BP2} under the name admissibility of distance balls. Here, we generalize the most important statements of \cite{BP2,Z} to planar graphs.

For negatively curved planar graphs, we prove the following:
\begin{itemize}
  \item[(5.)] Bounds for the growth of distance balls (Theorem~\ref{t:growthDistanceBalls} in Section~\ref{s:growthDistanceBalls}).
  \item[(6.)] Positivity and bounds for Cheeger's constant (Theorem~\ref{thm:cheegest} in Section~\ref{s:cheeger}).
  \item[(7.)] Empty interior for minimal bigons (Theorem~\ref{t:bigons} in Section~\ref{s:bigons}).
\end{itemize}
In \cite{BP1}, a result similar to (5.) is shown for tessellations. As for (6.), positivity of Cheeger's constant was proven by Woess \cite{W} for tessellations and simultaneously by $\dot{\mbox{Z}}$uk in a slightly more general context. Later, this was rediscovered in \cite{H}. Here, we give new explicit bounds for Cheeger's constant which can be interpreted in terms of the curvature. For related results, see for instance \cite{HJL,HiShi,K,KP}.
The results (6.) and (7.) can be understood as hyperbolic properties of a graph. In particular, if $G$ is the Cayley graph of a finitely generated group both results imply Gromov hyperbolicity. In general, this is not the case in our situation. However, under the additional assumption that the number of edges of finite polygons is uniformly bounded, empty interior for minimal bigons, along with Gromov hyperbolicity can be proven, see \cite{Z} and \cite{BP2}.

Let us now turn to the spectral implications. It is well known that bounds for Cheeger's constant imply bounds for the bottom of the spectrum of the graph Laplacian, see \cite{BMS-T,D,DKe,F,M}. The Cheeger constant also plays an important role in random walks. In particular, the simple random walk is transient if Cheeger's constant is positive, see \cite{G,W,W2} and also \cite{So}. Along with these well known relations, we show for planar graphs with non-positive curvature:
\begin{itemize}
  \item[(8.)] Triviality of the essential spectrum of the Laplacian if and only if the curvature decreases uniformly to $-\infty$ (Theorem~\ref{t:rap} in Section~\ref{s:specbounds}).
  \item[(9.)] Absence of finitely supported eigenfunctions for nearest neighbor operators (Theorem~\ref{t:no_cpt_supp_ef} in Section~\ref{s:no_cpt_supp_ef}).
\end{itemize}
Statement (8.) generalizes  results of \cite{K} for tessellations and of \cite{F} for trees. Statement (9.) is an extension of \cite{KLPS}. As for general planar graphs eigenfunctions of finite support can occur, this unique continuation statement shows that the analogy between Riemannian manifolds and  graphs is much stronger in the case of non-positive curvature.

The paper is structured as follows. In Section~\ref{s:Def}, we give the basic definitions. Section~\ref{s:NonPos} is devoted to the statement and the proof of Theorem~\ref{t:nonpos}. In Section~\ref{s:LocTessVsTess}, we construct the embedding into tessellations. 
Section~\ref{s:GeoApp} concentrates on the geometric applications (3.)-(7.) and Section~\ref{s:SpApp} is devoted to spectral applications (8.) and (9.).

\section{Definitions and a combinatorial Gauss-Bonnet formula}\label{s:Def}
Let $G=(V,E)$ be a graph embedded in an oriented topological surface $\mathcal{S}$. An embedding is a continuous one-to-one mapping from a topological realization of $G$ into $\mathcal{S}$. A graph admitting such an embedding into $\R^2$ is referred to as \emph{planar}. We will identify $G$ with its image in $\mathcal{S}$.
The faces $F$ of $G$ are defined as
the closures of the connected components of $\mathcal{S}\setminus \bigcup E$. We write $G=(V,E,F)$.

We call $G$ \emph{simple} if it has no loops (i.e., no edge contains only one vertex) and no multiple edges (i.e., two vertices  are not connected by more than one edge). We say that $G$ is \textit{locally finite} if for every point in $\mathcal{S}$ there exists an open neighborhood of this point that intersects with only finitely many edges. A characterization whether a Cayley graph of a group allows for a planar embedding that is locally finite was recently given in \cite{Geo}.

For the rest of this paper, we will assume that $G$ is locally finite.

We call two distinct vertices $v,w\in V$ \textit{adjacent} and we write $v\sim w$ if there is an edge containing both of them. We also say that two elements of $V$, $E$ and $F$ (possibly of distinct type) are \textit{adjacent} if their intersection is non-empty and we call two adjacent elements of the same type \textit{neighbors}.

By the embedding, each edge corresponds uniquely  to a curve (up to parametrization). We call a sequence of edges a \emph{walk} if the corresponding curves can be composed to a curve. Here, we allow for two sided infinite sequences. The \emph{length of the walk} is the number of edges in the sequence, whenever there are only finitely many and infinite otherwise. We refer to a subgraph, where the edges form a walk and each vertex is contained in exactly two edges as a \emph{path}.  The \emph{length of a path} is the length of the shortest walk passing all vertices. We call vertices that are contained in two edges of a walk or a path the \emph{inner vertices} of the walk or the path. A walk or a path is called \emph{closed} if every vertex  contained in it is an inner vertex and \emph{simply closed} if all vertices are contained in exactly two edges. Note that every simply closed walk induces a path and every closed path is simply closed.
We say a graph is \emph{connected} if any two vertices can be joined by a path.

For the remainder of this paper, we will assume  that $G$ is connected.

For a face $f\in F$, we call a walk a \emph{boundary walk} of $f$ if it meets all vertices in $f$ and  if it is closed whenever it is finite. 
The existence of boundary walks for all faces is ensured by the connectedness of the graph.
We let the \emph{degree} $|f|$ of a face $f\in F$ be the length of the shortest boundary walk of $f$, whenever $f$ admits a finite boundary walk and infinite otherwise.  We define the \emph{degree} $|e|$ of an edge $e\in E$ as the number of vertices contained in $e$. For a vertex $v\in V$, we define the \emph{degree} by $$|v|:=2\sum_{e\in E,v\in e}\frac{1}{|e|}.$$
The formula can be interpreted as the number of adjacent edges with degree two plus twice the number of adjacent edges with degree one. The latter are counted twice since they meet $v$ twice. A vertex $v\in V$ with $|v|=1$ is called a \emph{terminal vertex}.

To define curvature functions, we first have to introduce the
corners of a planar graph $G$. The set of \emph{corners} $C(G)$ is a subset of $V\times F$ such that the elements $(v,f)$ satisfy $v\in f$. We denote for a vertex $v$  the set   $C_{v}(G):=\{(v,g)\in C(G)\}$ and for a face $f$ the set $C_{f}(G):=\{(w,f)\in C(G)\}$. The \emph{degree} or the \emph{multiplicity} $|(v,f)|$ of a corner $(v,f)\in C(G)$ is the minimal number of times the vertex $v$ is met by a boundary path of $f$. For tessellations, the definition of corners coincides with the one in \cite{BP1,BP2} and there every corner has degree one. However, if $G$ is not a tessellation the degree of a corner can be larger than one, (see for instance the left hand side of Figure~\ref{f:prototypes} in the next section). For a vertex $v\in V$ and a face $f\in F$, we have
\begin{align*}
    |v|=\sum_{(v,g)\in C_{v}(G)}|(v,g)|
\qquad\mbox{and}\qquad    |f|=\sum_{(w,f)\in C_{f}(G)}|(w,f)|.
\end{align*}
We can think of the corners of a vertex $v$ with respect to the multiplicity as the partitions of a sufficiently small ball after removing the edges adjacent to $v$, (where, small means that no edge is completely included in the ball and there are exactly $|v|$ partitions).

The \emph{curvature function on the corners}
$\ka_C^G:C(G)\rightarrow\R$ is defined by
$$\ka_C^G(v,f):=\frac{1}{|v|}-\frac{1}{2}+\frac{1}{|f|},$$
with the convention that $1/|f|=0$ whenever $|f|=\infty$.
We define the \emph{curvature function on the vertices}
$\ka_V^G:V\rightarrow\R$ by
$$\ka_V^G(v):=\sum_{(v,f)\in C_v(G)}|(v,f)|\ka^G_C(v,f).
$$
By direct calculation, we arrive at
\begin{align*}
\ka_V^G(v)=1-\frac{|v|}{2}+\sum_{(v,f)\in C_v(G)} |(v,f)| \frac{1}{|f|}.
\end{align*}
We define the \emph{curvature function on the faces}
$\ka^G_F:F\rightarrow [-\infty,\infty)$ by
$$\ka^G_F(f):=\sum_{(v,f)\in C_f(G)}|(v,f)|\ka^G_C(v,f).$$
If $|f|=\infty$ and if there are infinitely many vertices in $f$ with vertex degree of at least three, then
the curvature of a face $f$  takes the value $-\infty$.

For finite subsets $V'\subseteq V$, $F'\subseteq F$, we write
$$\ka^G_V(V'):=\sum_{v\in V'}\ka_V^{G}(v)
\qquad\mathrm{and}\qquad\ka^G_F(F'):=\sum_{f\in F'}\ka^G_F(f).$$
Moreover, we let
$$\ka_C(G):=\sup_{(v,f)\in C(G)}\ka_C^G(v,f),\quad \ka_V(G):=\sup_{v\in V}\ka_V^G(v),\quad
\ka_F(G):=\sup_{f\in F}\ka_F^G(f).$$
We call a face a \emph{polygon} if it is homeomorphic to the closure of the unit disc $\D$ in $\R^2$ and its boundary is a closed path. We call a face an \emph{infinigon} if it is homeomorphic to $\R^2\setminus \D$ or the upper half plane $\R\times\R_+\subset \R^2$ and its boundary is a path. A graph $G$ is called \emph{tessellating} if the following conditions are satisfied:
\begin{itemize}
\item [(T1)] Every edge is contained in precisely two different faces.\label{T2*}
\item [(T2)] Every two faces are either disjoint or intersect precisely
in a vertex or in an edge.
\item [(T3)] Every face is a polygon.
\end{itemize}
The additional assumption in \cite{BP1,BP2} that each vertex has finite degree is already implied by local finiteness of $G$. Note that the dual graph of a non-positively corner curved tessellation is also a non-positively corner curved tessellation.  For a discussion, we refer to \cite{BP1}.

We call a path of at least two edges an \emph{extended edge} if all inner vertices have degree two and the beginning and ending vertex, in the case that they exist, have degree greater than two. An extended edge contained in two faces with infinite degree is called \emph{regular}. We introduce two  weaker conditions  than (T2) and (T3):
\begin{itemize}
\item [(T2*)] Every two faces are either disjoint or intersect precisely in a vertex, an edge or a regular extended edge.
\item [(T3*)] Every face is a polygon or an infinigon.
\end{itemize}
We call a graph satisfying (T1), (T2), (T3*) \emph{strictly locally tessellating} and a graph
satisfying (T1), (T2*), (T3*) \emph{locally tessellating}. Note that a locally tessellating graph that contains no extended edge is strictly locally tessellating. While the graphs studied in \cite{BP1,BP2,CC,DM,H,HiShi,K,KLPS,So,S1,SY,W} are tessellating, the results of \cite{KP,Z} concern strictly locally tessellating graphs. We give some examples.

\begin{Example}
(1.) Tessellations of the plane $\R^2$
or the sphere $\Sp^2$ are strictly locally tessellating. Furthermore, tessellations of $\Sp^2$ embedded into $\R^2$ via stereographic projection are also strictly locally tessellating.\\
(2.) Trees are strictly locally tessellating if and only if the branching number of every vertex is greater than one. In this case, the tree is negatively curved in each corner, vertex and face. \\
(3.) The graph with vertex set $\Z$ and  edge set $\{[n,n+1]\}_{n\in\Z}$ is locally tessellating, but not strictly locally tessellating and has curvature zero in every corner, vertex and face.
\end{Example}
Next, we will prove a combinatorial Gauss-Bonnet formula.
We refer to \cite{BP1} for  background and
proof in the case of tessellations, see also \cite{CC,DM} for further reference.

Let $G=(V,E,F)$ be a planar graph embedded into $\Sp^2$ or $\R^2$. For a subset $W\subseteq V$, we denote by $G_W=(W,E_W,F_W)$ the subgraph of $G$ induced by the vertex set $W$, where $E_W\subseteq E$ are the edges that contain only vertices in $W$ and $F_W$ are the faces induced by the graph $(W,E_W)$.
For a finite connected subset of vertices $W\subseteq V$,  Euler's formula reads
\begin{equation*}
|V_W|-|E_W|+|F_W|=2.
\end{equation*}
Observe that $F_W$ contains also an unbounded face which explains the two on the right hand side.

\begin{Proposition}\textbf{(Gauss-Bonnet formula)}
\emph{Let $G$ be planar and $W\subseteq V$ finite and connected. Then $$\ka^{G_W}_V(W)=2.$$}
\begin{proof}
We have, by definition,
\begin{eqnarray*}
\ka^{G_W}_V(W)   &=&  \sum_{(v,f)\in
C(G_W)}|(v,f)|\ab{\frac{1}{|v|}+\frac{1}{|f|}-\frac{1}{2}}\\
   &=&\sum_{v\in
W}\sum_{c\in C_v(G_W)}\frac{|c|}{|v|}+\sum_{f\in F_W}\sum_{c\in
C_f(G_W)}\frac{|c|}{|f|}-\sum_{v\in W}\sum_{c\in C_v(G_W)}\frac{|c|}{2}.
\end{eqnarray*}
Since $|v|=\sum_{c\in C_v(G_W)}|c|$, the first term is equal to $|W|$.
As $W$ is connected, we  have $|f|=\sum_{c\in C_f(G_W)}|c|$. Thus, the second term is equal to $|F_W|$. Let $E_{v,j}=\{e\in E_W\mid v\in e,\;|e|=j\}$ for $v\in W$ and $j=1,2$. Then, $\sum_{c\in C_v(G_W)}|c|=|v|=2|E_{v,1}|+|E_{v,2}|$ by the definition of $|v|$. Moreover, for each $e\in E_{v,2}$ there is a unique $w\in W$, $w\neq v$, such that $e\in E_{w,2}$. We conclude
\begin{eqnarray*}
\sum_{v\in W}\sum_{c\in C_v(G_W)}\frac{|c|}{2} = \sum_{v\in
W}|E_{v,1}|+\frac{1}{2}|E_{v,2}|=|E_W|.
\end{eqnarray*}
By Euler's formula, we obtain the result.
\end{proof}
\end{Proposition}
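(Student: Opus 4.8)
The plan is to expand the left-hand side directly from the definitions and then recognize the three resulting sums as $|V_W|$, $|F_W|$, and $|E_W|$, so that the claim reduces to Euler's formula. I would start from $\ka^{G_W}_V(W)=\sum_{v\in W}\ka^{G_W}_V(v)$ and unfold $\ka^{G_W}_C(v,f)=\frac1{|v|}+\frac1{|f|}-\frac12$, regrouping the whole expression as a single sum over all corners $(v,f)\in C(G_W)$ weighted by the multiplicities $|(v,f)|$. I would then split it into the three pieces coming from the $1/|v|$, $1/|f|$, and $-1/2$ terms and evaluate each separately.

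For the first piece I would sum over the corners at a fixed vertex: since $|v|=\sum_{c\in C_v(G_W)}|c|$ is exactly the identity recorded among the definitions, we get $\sum_{c\in C_v(G_W)}|c|/|v|=1$, and summing over $v\in W$ gives $|V_W|$. For the second piece I would instead group the corners by the face they belong to; the analogous identity $|f|=\sum_{c\in C_f(G_W)}|c|$ yields $1$ for each face, and summing over all faces produces $|F_W|$.

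The third piece is where I expect the (modest) obstacle to lie, namely the edge count. After grouping by vertices, $\sum_{c\in C_v(G_W)}|c|/2=|v|/2$, so I must show $\sum_{v\in W}|v|/2=|E_W|$. For this I would split the edges at $v$ by their edge-degree, writing $|v|=2|E_{v,1}|+|E_{v,2}|$ in accordance with the interpretation of $|v|$, where $E_{v,j}$ collects the edges of degree $j$ incident to $v$. A degree-two edge meets two distinct vertices of $W$ and so is counted with weight $1/2$ at each of its endpoints, while a degree-one edge contributes weight $1$ at its single vertex; in both cases every edge of $E_W$ is accounted for exactly once, giving $\sum_{v\in W}\ab{|E_{v,1}|+\frac12|E_{v,2}|}=|E_W|$.

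Combining the three evaluations yields $\ka^{G_W}_V(W)=|V_W|+|F_W|-|E_W|$, and Euler's formula $|V_W|-|E_W|+|F_W|=2$ finishes the proof. The genuinely delicate point is not any estimate but the identity $|f|=\sum_{c\in C_f(G_W)}|c|$ used in the face term: this requires $W$ to be connected, since otherwise a single face of $G_W$ could possess several boundary components and its degree would no longer equal the sum of its corner multiplicities. Thus connectedness of $W$ is what the argument really hinges on.
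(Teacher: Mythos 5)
Your proposal is correct and follows essentially the same route as the paper's proof: expand the curvature as a sum over corners, identify the three pieces with $|V_W|$, $|F_W|$ (using connectedness for the identity $|f|=\sum_{c\in C_f(G_W)}|c|$), and $|E_W|$ (splitting edges by their degree so that loops are weighted correctly), then invoke Euler's formula. Your closing remark about where connectedness enters matches the paper's use of that hypothesis exactly.
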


\begin{Remark}
The Gauss-Bonnet formula immediately implies that a finite graph must admit some positive curvature. This is, in particular, the case for locally finite planar graphs embedded in $\Sp^2$.
\end{Remark}

We write $d(v,w)$ for the length of the shortest path connecting the vertices $v$ and $w$.
For a set of vertices $W\subseteq V$, we define the balls and the spheres of radius $n$ by
\begin{align*}
  B_n(W) &:=  B_n^G(W):=\{v\in V\mid d(v,w)\leq n\mbox{ for some } w\in W\},\\
  S_n(W) &:=  S_n^G(W):=B_n(W)\setminus B_{n-1}(W).
\end{align*}
We define the  \emph{boundary faces} of $W$ by
$$\dd_F W:=\dd^G_F W:=\{f\in F\mid f\cap  W\neq\emptyset,\;\;  f\cap V\setminus W\neq\emptyset\}.$$

\section{Infinity and local tessellating properties of non-positively curved planar graphs}\label{s:NonPos}
This section is dedicated to prove the first main result.
\begin{Thm}\label{t:nonpos}
\emph{Let $G$ be a  planar graph that is connected and locally finite.
If one of the following conditions is satisfied
\begin{itemize}
  \item [(a.)] $\ka_C(G)\leq 0$ or
  \item [(b.)] $\ka_V(G)\leq 0$ and $G$ is simple or
  \item [(c.)] $\ka_F(G)\leq0$, each extended edge
  is regular and there are no terminal vertices,
\end{itemize}
then  $G$ is infinite and locally tessellating. If a strict
inequality holds in condition $\mathrm{(a.)}$ or $\mathrm{(b.)}$ then $G$ is even strictly locally tessellating.}
\end{Thm}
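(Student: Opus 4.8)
The plan is to organise everything around the combinatorial Gauss-Bonnet formula, proving the infinity statement first and then the three local tessellating conditions (T1), (T2*), (T3*). For the infinity statement I argue by contradiction: if $G$ were finite, then $W=V$ in the Gauss-Bonnet Proposition gives $\ka_V^{G}(V)=2$. Under (a.), non-positive corner curvature forces $\ka_V^{G}(v)=\sum_{(v,f)\in C_v(G)}\av{(v,f)}\,\ka_C^{G}(v,f)\le 0$ for every $v$, hence $\ka_V^{G}(V)\le 0$; under (b.) this is assumed outright. For (c.) I use that $\ka_V^{G}(V)$ and $\ka_F^{G}(F)$ both equal $\sum_{(v,f)\in C(G)}\av{(v,f)}\,\ka_C^{G}(v,f)$, so $\ka_F^{G}(F)=2$, contradicting $\ka_F(G)\le 0$; here no $-\infty$ term occurs since a finite graph has no face of infinite degree. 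Thus $G$ is infinite in all three cases.

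I next extract the local consequences shared by the three hypotheses. A terminal vertex has $\ka_C^{G}(v,f)=\tfrac12+\tfrac1{\av f}>0$ and $\ka_V^{G}(v)\ge\tfrac12>0$, so (a.) and (b.) forbid terminal vertices and (c.) excludes them by assumption. For a vertex of degree two, the two corners have curvature $\tfrac1{\av f}$ and $\tfrac1{\av g}$ and vertex curvature $\tfrac1{\av f}+\tfrac1{\av g}$; non-positivity in either the corner sense (a.) or, using simplicity, the vertex sense (b.) forces $\av f=\av g=\infty$, so every extended edge lies between two infinite faces and is regular --- exactly the hypothesis built into (c.). Hence under all three conditions there are no terminal vertices and every extended edge is regular. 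I also invoke the standard topological fact that, as $G$ is connected, each face is homeomorphic to a disc, a half-plane or a disc-complement, so that (T3*) reduces to showing that each boundary walk is a \emph{simple} path, equivalently that \emph{every corner has multiplicity one}.

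The heart of the proof is to establish that every corner has multiplicity one and that two distinct faces meet in at most a vertex, an edge or a regular extended edge. Multiplicity one gives (T3*) directly, and it also yields (T1): if an edge $e=\{v,w\}$ had the same face $f$ on both sides, then (as $v,w$ are not terminal) the two sectors of $f$ along $e$ at $v$ would be distinct, forcing $\av{(v,f)}\ge 2$. After (T1) and multiplicity one are in hand, (T2*) follows by analysing along which vertices and edges two faces can meet. To prove multiplicity one I argue by contradiction: a corner $(v,f)$ of multiplicity $\ge 2$ is a pinch of $f$ at $v$, and likewise a failure of (T1) or (T2*) manifests as such a pinch. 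I would isolate the offending configuration together with the finite region it delimits, pass to a suitable finite connected subgraph $G_W$, and play the identity $\ka^{G_W}_V(W)=2$ against the non-positivity of the ambient interior curvature, so that the pinch is forced to supply curvature that the non-positive hypothesis cannot provide. The \textbf{main obstacle} is precisely this bookkeeping: one must control how vertex degrees, face degrees and, above all, corner multiplicities change on passing from $G$ to $G_W$, and one must cope with the fact that some pathologies are genuinely global --- for an infinite graph a cut edge can still separate two distinct infinigons, as for trees, so the non-positivity cannot be exploited purely locally. Case (b.) needs simplicity here to exclude loops and multiple edges, whose digon faces of degree two would create positive corner curvature, and case (c.) is recovered from the same analysis via its built-in hypotheses.

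Finally, for the strict statement I note that a strict inequality in (a.) or (b.) rules out vertices of degree two, since such a vertex has a corner of curvature $\tfrac1{\av f}\ge 0$ and vertex curvature $\tfrac1{\av f}+\tfrac1{\av g}\ge 0$, neither of which can be strictly negative. Hence $G$ has no extended edge, and by the remark preceding the examples a locally tessellating graph without extended edges is strictly locally tessellating. This also explains why strictness is asserted only for (a.) and (b.): condition (c.) tolerates degree-two vertices and so yields only the weaker (T2*).
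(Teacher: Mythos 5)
Your overall architecture matches the paper's: infinity via Gauss--Bonnet, the elementary exclusion of terminal vertices and of degree-two vertices adjacent to finite faces (the paper's Lemma~4), the reduction of (T1), (T2*), (T3*) to the statement that no face is ``pinched'' at a vertex and no two faces meet in two components (the paper's Lemma~3, phrased via the set $D(F)$ of degenerate faces and pairs), and the strictness argument via absence of extended edges. All of that is correct. But the central implication --- that a pinched face or a degenerate pair forces positive curvature \emph{in $G$} --- is exactly the part you leave as a sketch, and the sketch as stated does not work. If you cut out the finite region $W$ delimited by the offending configuration and apply Gauss--Bonnet to $G_W$, the identity $\ka^{G_W}_V(W)=2$ concerns the curvature computed in the subgraph $G_W$, not in $G$: every vertex on the boundary of $W$ loses incident edges and gains the unbounded face of $G_W$ (which contributes $0$ to its curvature), so its curvature in $G_W$ is typically much larger than in $G$. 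The positive total curvature that Gauss--Bonnet guarantees can therefore sit entirely on $\partial W$, where it says nothing about $\ka^G$, and no contradiction with $\ka^G\le 0$ follows. You correctly flag this bookkeeping as the main obstacle, but flagging it is not the same as resolving it.

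The paper's resolution is a genuinely non-trivial construction (the ``copy and paste lemma''). First one shows (Lemma~1) that a degenerate face or pair yields a finite connected subgraph $G_1$ bounded by a simply closed path such that at most two vertices $v_1,v_2$ of $G_1$ connect it to the rest of $G$. One then glues $2N$ reflected copies of $G_1$ along the two halves of its boundary path and closes the result up into a finite graph $G_3$ embedded in $\Sp^2$. The point of the doubling is that every former boundary vertex of $G_1$ now has degree $2(|v|_1-1)\ge |v|$ in $G_3$, and $N$ is chosen large enough that the same holds at $v_1,v_2$; together with the fact that all faces of $G_3$ are bounded, a case analysis then gives $\ka_C^{G_3}(c)\le\ka_C^{G}(c)$ and $\ka_V^{G_3}(v)\le\ka_V^{G}(v)$ for corresponding corners and vertices. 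Applying Gauss--Bonnet to the finite graph $G_3$ (total curvature $2$) and pulling back along the $2N$-to-$1$ identification then produces a set of vertices, respectively faces, of $G$ with total curvature at least $1/N>0$. Some such global device is unavoidable: the positivity forced by a pinch can be located far from the pinch itself (think of a dead-end branch whose positive curvature sits at its tip), so no purely local computation at the offending corner can close the argument. Without this lemma, or a substitute for it, your proof has a gap precisely at its load-bearing step.
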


The main idea of the proof can be summarized as follows. The infinity is a direct consequence of the Gauss-Bonnet formula. Moreover, assuming a graph is not locally tessellating, we will construct a finite planar graph with smaller or equal curvature which can be embedded into $\Sp^2$. By the Gauss-Bonnet formula, this graph must admit some positive curvature. As the curvature is smaller or equal, the original graph must have some positive curvature as well.

We start with analyzing the pathologies that occur in general planar graphs that are not locally tessellating. Let $G=(V,F,E)$ be a planar graph that is connected and locally finite. A face $f\in F$ is called \emph{degenerate} if it contains a vertex $v$ such that $|(v,f)|\geq2$. Note that this is in particular the case if $v$  is contained in three or more boundary edges  $ f$ or  $f$ includes and an edge that is included in no other face than $f$. 
A pair of faces $(f,g)$ is called \emph{degenerate} if $f\cap g$ consists of at least two
connected components. Figure \ref{f:prototypes} shows examples of degenerate faces. We denote by $D(F)\subseteq F$ the set of all faces that are degenerate or are contained in a degenerate pair of faces.
\begin{figure}[!h]
\scalebox{0.3}{\includegraphics{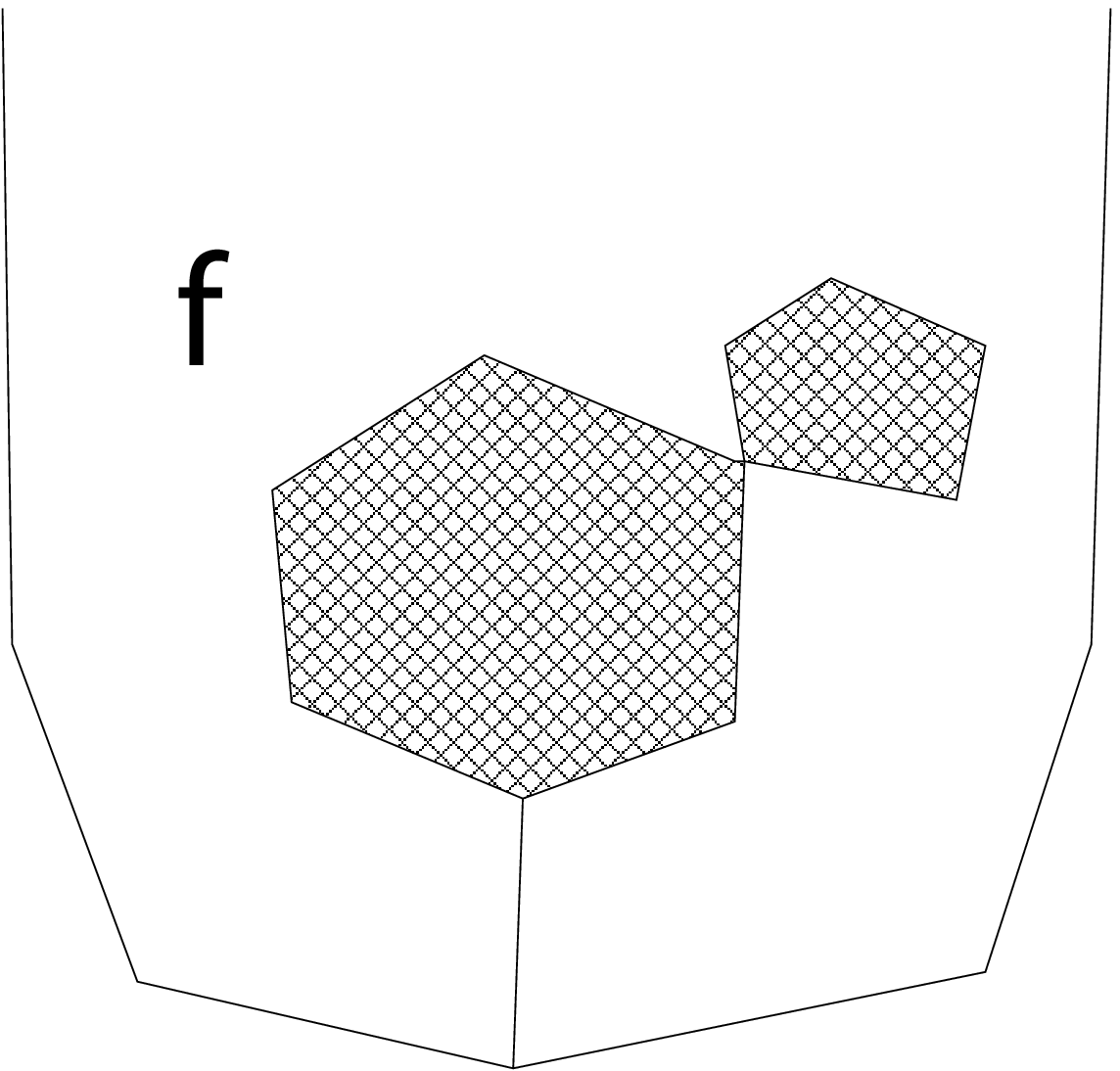}}
\hspace{1cm}\scalebox{0.3}{\includegraphics{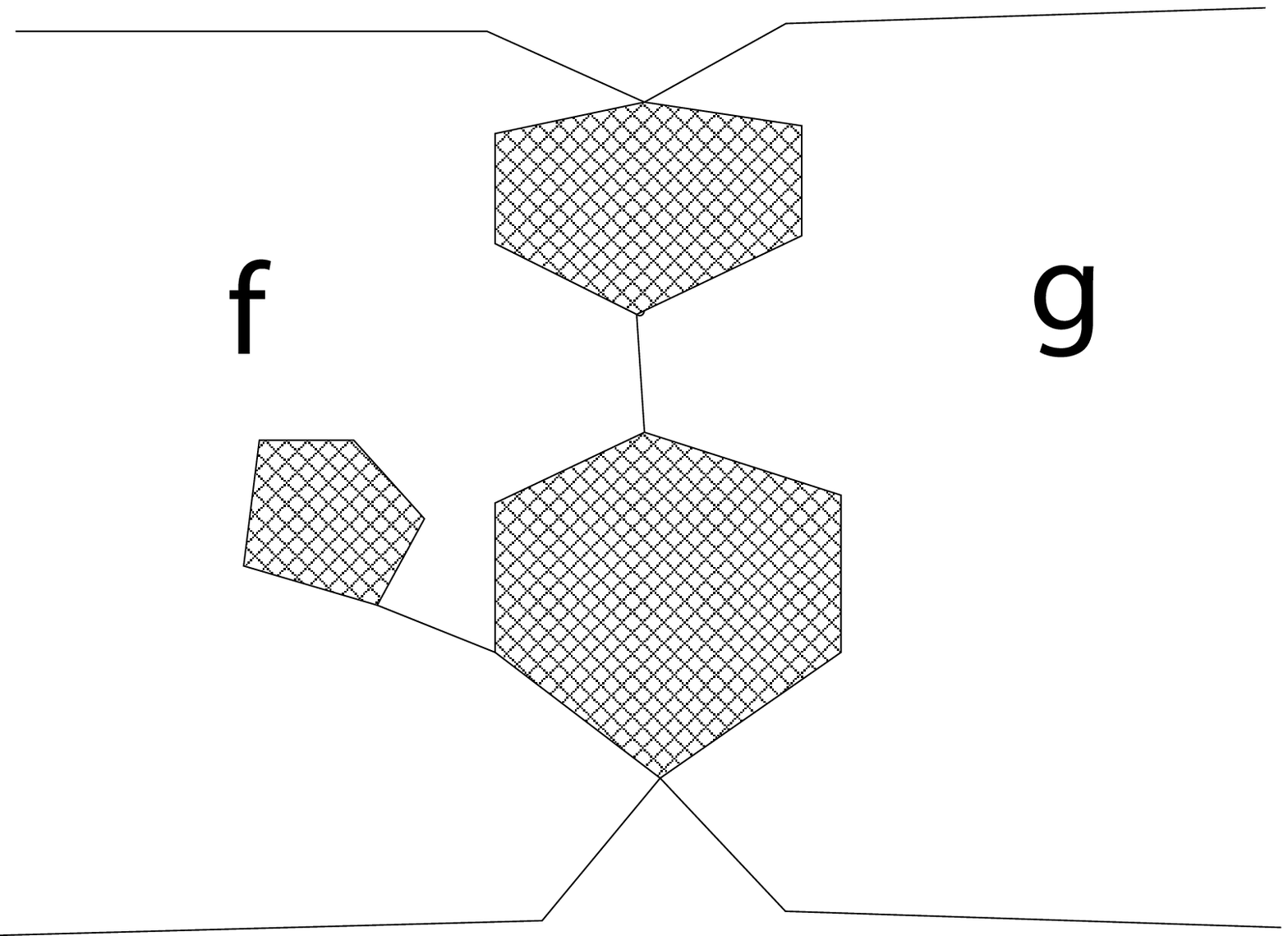}}
\caption{\label{f:prototypes}\scriptsize Examples of a degenerate face $f$ and a degenerate pair of
faces $(f,g)$.}
\end{figure}

\begin{Lemma}\label{l:nonpos_subgraph}
\emph{Let $G=(V,E,F)$ be a  simple, planar graph that is connected,   locally finite and contains no terminal vertices. If $D(F)\neq \emptyset$, then there is a finite and connected subgraph $G_1=(V_1,E_1)$ which is bounded by a simply closed path and satisfies the following property: there are at most two vertices $v_1,v_2\in V_1$ such that each path connecting $V_1$ and $V\setminus V_1$ meets either $v_1$ or $v_2$. In particular, $v_1$, $v_2$ lie in the boundary path of $G_1$.}
\begin{proof}
The proof consists of two steps. Firstly, we find a finite set $W\subseteq V$ that contains at most two vertices with the asserted property. We have to deal with the case of a degenerate face and a degenerate pair separately.  Secondly, we find a subgraph of $G_W$ that has a simply closed boundary.
\begin{figure}[!h]
\scalebox{0.37}{\includegraphics{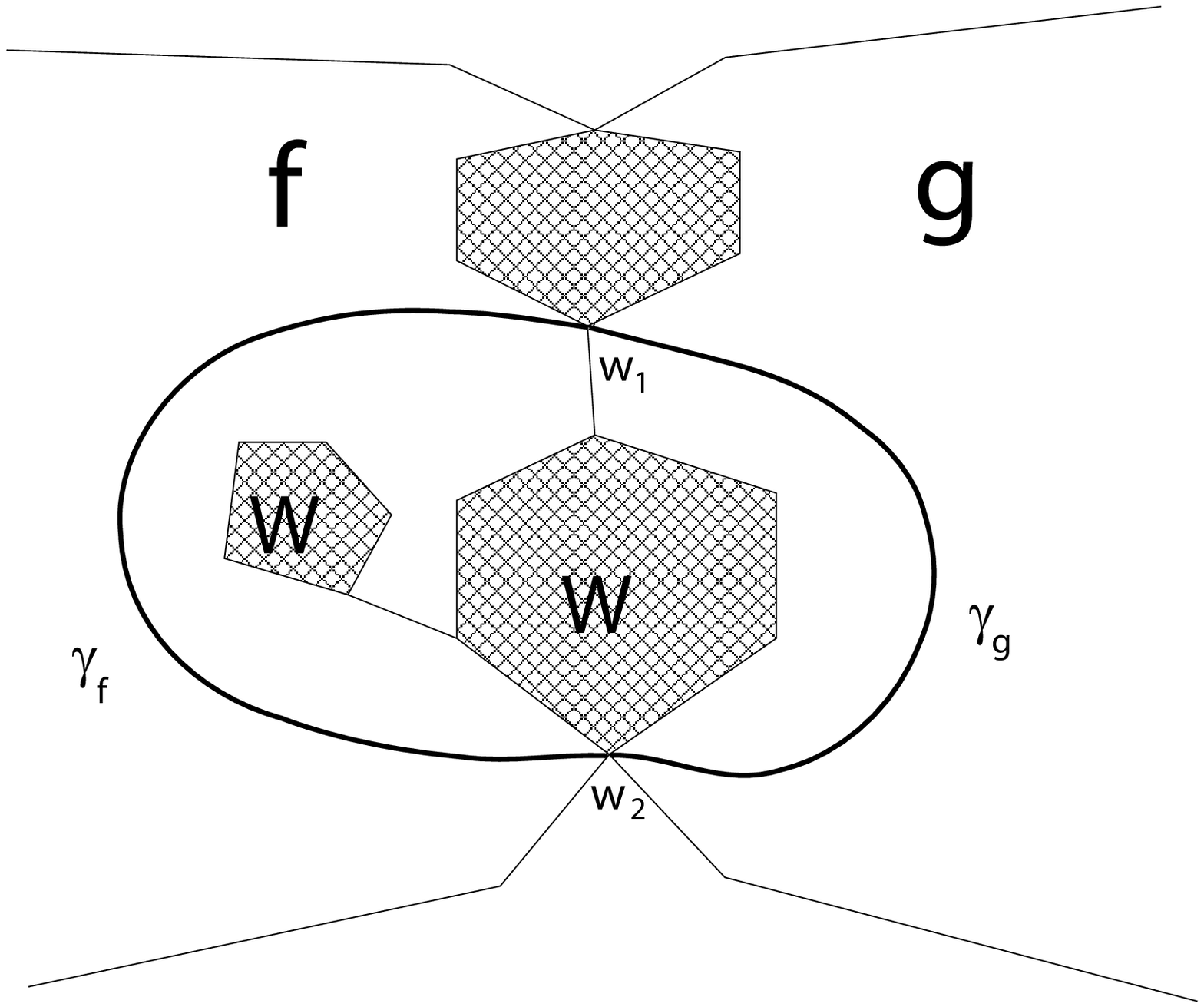}}
\scalebox{0.37}{\includegraphics{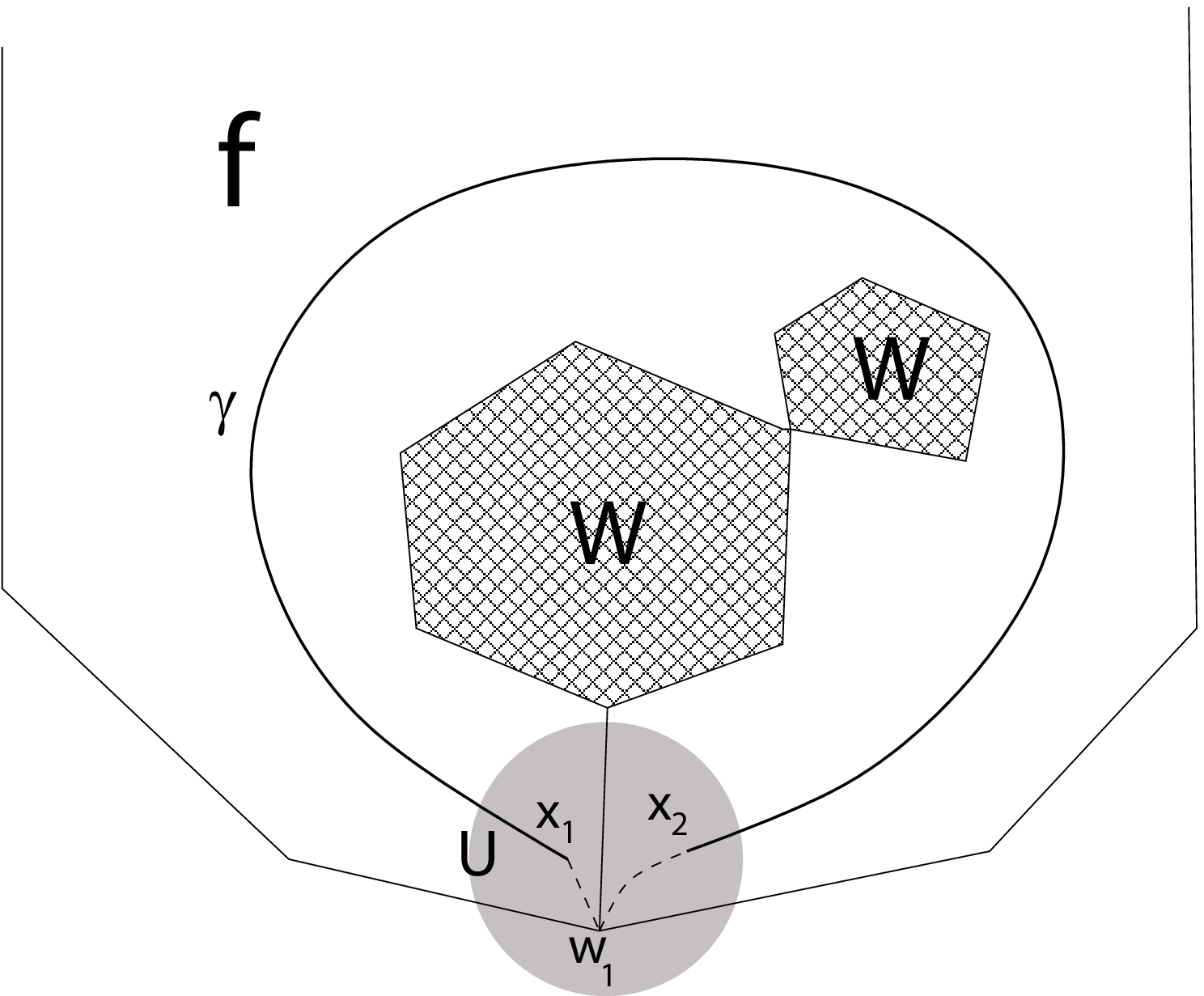}}
\caption{\label{f:W}\scriptsize  The figure shows how to isolate a finite subset $W$, whenever there is degenerate pair of faces or a degenerate face.}
\end{figure}
\\
We start with the case of a degenerate pair of faces $(f,g)$. Let
$w_1,w_2$ be two vertices that are contained in different connected components of the intersection of $f$ and $g$. Let $\gm_f$ be a simple curve that lies in $(\inn\, f)\cup\{w_1,w_2\}$ and connects $w_1$ and $w_2$, where
$\inn\, f:=f\setminus\bigcup\{e\in E\mid e\in f\}$. Similarly, let $\gm_g$ be a simple curve in $(\inn\, g)\cup\{w_1,w_2\}$ connecting $w_1$ and $w_2$. Composing these curves, we obtain a simply closed curve $\gm$ which
divides the plane by Jordan's curve theorem into a bounded and an unbounded component. We denote the bounded component by $B$. The set $W=V\cap B$ can be connected to $V\setminus W$ only by walks which meet $w_1$ or $w_2$. For an illustration see the left hand side of Figure~\ref{f:W}.\\
We now turn to the case when there is no degenerate pair of faces. As $D(F)\neq\emptyset$, there must be a degenerate face $f$. Let $w_1$ be a vertex in $f$ with $|(w_1,f)|\geq2$. We pick an open simply connected neighborhood $U$ of $w_1$ that contains no other vertex except for $w_1$ and $e\cap U$ is connected for all edges $e$ adjacent to $w_1$. Let $x_1$ and $x_2$ be two arbitrary points in two different connected components of $(\inn\, f)\cap U$. We connect $x_1$ and $x_2$ by a simple curve in $\inn\, f$ and connect $x_1,x_2$ and $w_1$ also by simple curves that lie in the corresponding connected component of $U$. By composition, we obtain a simply closed curved and by Jordan's curve theorem we get a bounded set $B$. Note that $W=V\cap B$ can be connected
to $V\setminus W$ only by walks which  meet $w_1$. For an illustration see the right hand side of Figure~\ref{f:W}.\\
In both cases, we identified a set $W$ which is finite and connected since the graph is locally finite and connected. Now, we find a subset $V_1\subseteq W$ such that
$G_1=(V_1,E_1)$ is bounded by a simply closed path. By construction,  $w_1$ lies in an unbounded face of $F_W$, which we denote by $f$ and which has a finite boundary walk. We start from $w_1$ walking around $f$. The walk might visit certain vertices several times. The vertex $w_1$ is visited at least twice, since the walk is finite. We pick a subsequence of the walk such that no vertex is met twice except for the starting and ending vertex, which we denote by $v_1$. In Figure~\ref{f:bdry}, this is illustrated (with $v_1=w_3$).\begin{figure}[!h]
\scalebox{0.4}{\includegraphics{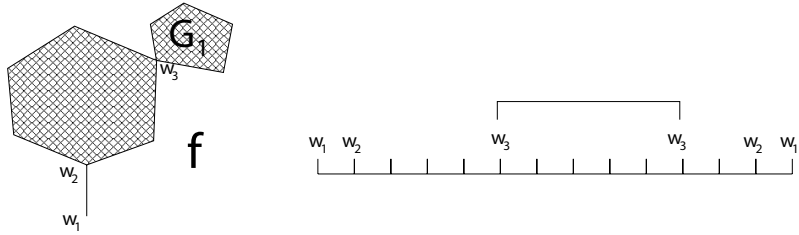}}
\caption{\label{f:bdry}\scriptsize The right hand side shows an enumeration of the boundary walk around $G_W$ illustrated on the left hand side. To isolate the subgraph $G_1$, we pick the simply closed
subwalk from $w_3$ to $w_3$.}
\end{figure}\\
The sequence of edges forms a closed path and
encloses a subgraph, which we denote by $G_1=(V_1,E_1)$. Note that $V_1\subseteq V$ contains $v_1$ which might be equal to $w_1$ and $V_1$ also might contain $w_2$ which we denote in this case by $v_2$. Nevertheless, $v_1$ and $v_2$ are the only vertices in $V_1$ that can be connected to $V\setminus V_1$ by edges of $G$. Thus, $G_1$ has the desired properties and we finished the proof.
\end{proof}
\end{Lemma}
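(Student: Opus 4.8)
The plan is to exploit the degeneracy hypothesis $D(F)\neq\emptyset$ to draw a Jordan curve in the plane that separates a finite set of vertices from the rest of the graph while crossing the vertex set in at most two points, and then to prune the resulting induced subgraph down to one whose boundary is a single simply closed path. Since $D(F)\neq\emptyset$, the graph contains either a degenerate pair of faces or, if no such pair exists, at least one degenerate face, and I would treat these two situations separately.

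First, suppose there is a degenerate pair $(f,g)$, so $f\cap g$ has at least two connected components; choose vertices $w_1,w_2$ lying in two distinct such components. Because the interior $\inn\, f$ of a face is connected, I can join $w_1$ to $w_2$ by a simple arc $\gm_f$ running through $(\inn\, f)\cup\{w_1,w_2\}$, and likewise by an arc $\gm_g$ through $(\inn\, g)\cup\{w_1,w_2\}$. Since the two face interiors are disjoint, the concatenation of $\gm_f$ and $\gm_g$ is a simply closed curve $\gm$ meeting $V$ only in $w_1,w_2$. By the Jordan curve theorem $\gm$ bounds a disc $B$, and I put $W:=V\cap B$; as face interiors contain no edges, any path from $W$ to $V\setminus W$ must cross $\gm$ at a vertex, hence through $w_1$ or $w_2$.

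If instead there is no degenerate pair, then there must be a degenerate face $f$, and I pick a vertex $w_1$ with $|(w_1,f)|\geq 2$. Choosing a small simply connected neighborhood $U$ of $w_1$ that contains no other vertex and meets each adjacent edge in a connected set, the multiplicity condition forces $(\inn\, f)\cap U$ to have at least two connected components. I then connect two points $x_1,x_2$ taken from distinct components by a simple arc in $\inn\, f$ and join each of them back to $w_1$ inside the respective component of $U$; the resulting simply closed curve again bounds a disc $B$, and $W:=V\cap B$ connects to $V\setminus W$ only through $w_1$. In both cases local finiteness and connectedness of $G$ make $W$ finite and connected.

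The remaining and, I expect, most delicate step is to replace $G_W$ by a subgraph with a genuinely simply closed boundary, since the outer boundary walk of $G_W$ may revisit vertices precisely because of the cut structure at $w_1$ (and $w_2$). Here I would start a boundary walk of the unbounded face of $G_W$ at $w_1$; this walk is finite and must return to $w_1$, so from it I can extract a minimal closed subwalk in which no vertex except the common start-and-end vertex $v_1$ is repeated. This subwalk is a simply closed path enclosing a subgraph $G_1=(V_1,E_1)$ with $V_1\subseteq W$. The main point to verify is that passing from $W$ to $V_1$ creates no new connections to the exterior: the only vertices of $V_1$ adjacent to $V\setminus V_1$ remain $v_1$ (possibly equal to $w_1$) and, in the degenerate-pair case, $w_2$ if it survives in $V_1$ (then renamed $v_2$), and by construction these lie on the boundary path. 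Establishing this cut-preservation under the pruning is the crux of the argument.
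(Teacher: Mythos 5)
Your proposal follows the paper's proof essentially step for step: the same case split into a degenerate pair versus a degenerate face, the same Jordan-curve construction through the face interiors meeting $V$ only in $w_1$ (and $w_2$), and the same extraction of a simply closed subwalk from the boundary walk of the unbounded face of $G_W$ to obtain $G_1$. The cut-preservation point you flag as the crux is handled in the paper at the same level of detail, by observing that only $v_1$ and possibly $w_2$ can be joined to $V\setminus V_1$ by edges of $G$.
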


The next lemma is the main tool for the proof of Theorem \ref{t:nonpos}. It shows that the existence of
degenerate faces or pairs implies positive curvature.

\begin{Lemma}\textbf{(Copy and paste lemma)} \label{l:non-pos_CopyPaste}
\emph{Let $G=(V,E,F)$ be a  simple, planar graph that is connected,   locally finite, contains no terminal vertices and all extended edges are regular. If $D(F)\neq \emptyset$, then there are $F'\subseteq F$ and $V'\subseteq V$ such that $\ka^G_F(F')> 0$ and $\ka^G_V(V')> 0$.}
\begin{proof}
The idea of the proof is easy to illustrate. We assume $D(F)\neq \emptyset$ and take the subgraph $G_1$ of Lemma~\ref{l:nonpos_subgraph}. We make copies of $G_1$, paste them along the boundary and embed the resulting graph into $\Sp^2$. See Figure~\ref{f:CopyPaste} below. We then show that the curvature of this graph compared to the curvature in $G$ does not increase, as long as we make enough copies of $G_1$. The statement is then implied by the Gauss-Bonnet formula.
\begin{figure}[h]
\scalebox{0.48}{\includegraphics{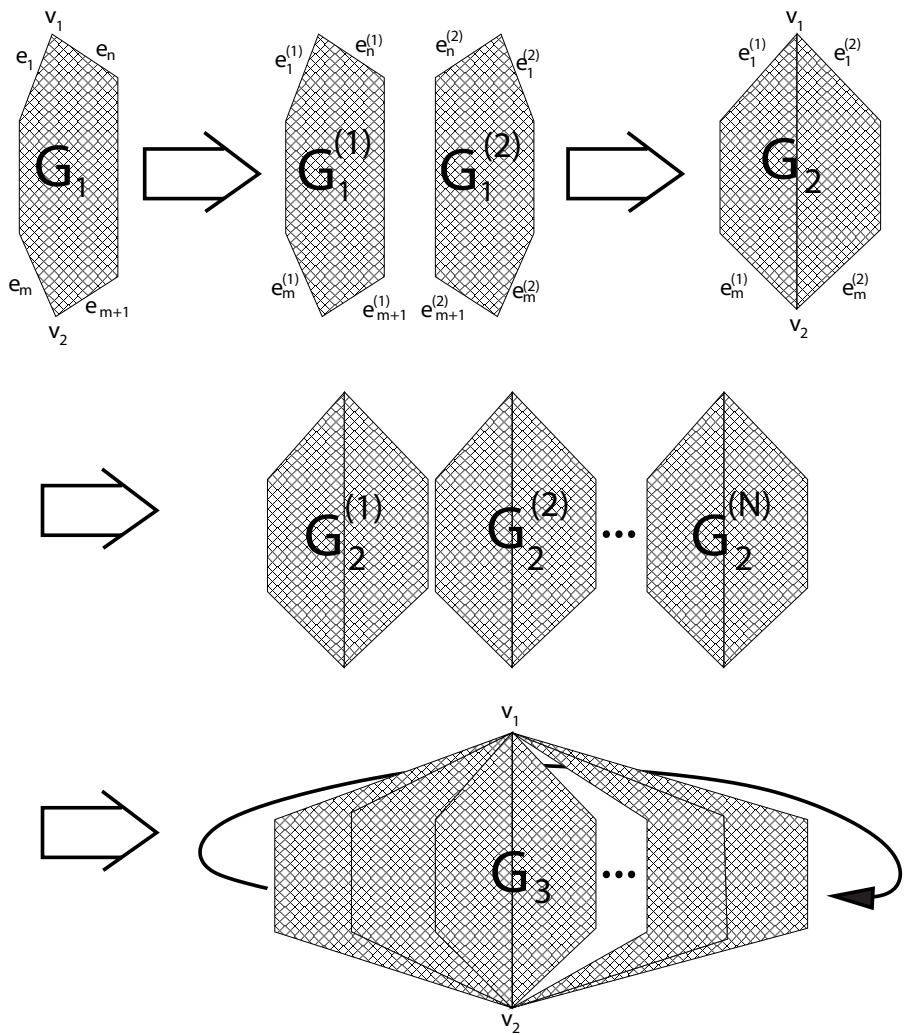}} \caption{\label{f:CopyPaste} An
illustration of the copy and paste procedure.}
\end{figure}
\\
Assume $D(F)\neq \emptyset$. By Lemma~\ref{l:nonpos_subgraph}, there is a finite subgraph $G_1=(V_1,E_1,F_1)$ of $G$ which is enclosed
by a closed path $p$. The vertex degree in $G_1$ differs from $G$ in at most two vertices, which we denote by $v_1$ and $v_2$. (If there is only one vertex, then we choose another vertex arbitrarily in the boundary path of $G_1$.) Let $\{e_1,\ldots,e_n\}$ be the
edges of the boundary path of $G_1$ starting and ending at $v_1$, i.e., $v_1=e_1\cap e_n$. Moreover, let $m<n$ such that $v_2=e_m\cap e_{m+1}$. Denote $p_1=\{e_1,\ldots,e_m\}$ and
$p_2=\{e_{m+1},\ldots,e_n\}$. We take two copies
$G_1^{(1)}$ and $G_1^{(2)}$ of $G_1$. We paste them along the edges of the subpaths $p_2^{(1)}$ and $p_2^{(2)}$ (where $p_2^{(j)}$ corresponds to $p_2$ in $G_1^{(j)}$ for $j=1,2$), i.e., we identify the edges
$\{e_{m+1}^{(1)},\ldots,e_n^{(1)}\}$ in $G_1^{(1)}$ with the edges $\{e_{m+1}^{(2)},\ldots,e_n^{(2)}\}$ in $G_1^{(2)}$. We denote the resulting graph by $G_2$. Note that the edges of the boundary path of $G_2$ are the set $p_1^{(1)}\cup p_1^{(2)} =\{e_1^{(1)},\ldots,e_m^{(1)},e_1^{(2)},\ldots,e_m^{(2)}\}$.
Denote $q_1=p_1^{(1)}$ and $q_2=p_1^{(2)}$. Now, let $N$ be an integer, that will be quantified later. We take $N$ copies $G_2^{(1)},\ldots,G_2^{(N)}$ of $G_2$ and paste them along the subpaths $q_2^{(j)}$ and $q_1^{(j+1)}$ for $j=1,\ldots ,N-1$.
We embed the resulting planar graph into $\Sp^2$ and we finally paste $q_1^{(1)}$ and $q_2^{(N)}$. We denote the resulting graph embedded in $\Sp^2$ by $G_3=(V_3,E_3,F_3)$. In Figure \ref{f:CopyPaste}, the procedure is illustrated. \\
We will make the following observations which are implied by the construction:
\begin{itemize}
\item [(O1)] We can identify each corner in $C(G_3)$ uniquely with a corner in $C(G)$ and $C(G_1)$ and they have the same multiplicity. For each such corner in $C(G)$ or $C(G_1)$ there are exactly $2N$ corresponding corners in $C(G_3)$.
\item [(O2)] We can identify each face in $F_3$ uniquely with a face in $F$ and $F_1$. Each bounded face in $F_1$ can be identified uniquely with a face in $F$ and exactly $2N$ corresponding faces in $F_3$. Moreover, the face degree of corresponding faces is the same in $G$, $G_1$ and $G_3$
\end{itemize}
We now quantify $N$ which was introduced above. Let
$$N=\max_{j=1,2} \left\lceil
\frac{3\av{v_j}}{2\ab{\av{v_j}_1-1}}\right\rceil,$$
where $\lceil x\rceil$ denotes the smallest integer that is greater than $x$ for $x\in \Q$ and $\av{\cdot}_1$ denotes the vertex degree in $G_1$. There is one more observation:
\begin{itemize}
\item [(O3)] We can identify each vertex in $V_3$ uniquely with a vertex in $V$ and in $V_1$. Each of these vertices in $V$ and $V_1$  either corresponds  to $v_1$, $v_2$ or to exactly $N$ vertices of the former boundary path $p$ of $G_1$ or to exactly $2N$ vertices in $V_3$. Moreover, the vertex degree of a vertex in $G_3$ is at least the vertex degree of the corresponding vertex in $G$ and it is at least three.
\end{itemize}
The statement about the corresponding vertices in (O3) follows by construction. To check the statement about the vertex degrees in (O3), one has to consider three cases. The statement is clear for vertices in $V_3$ that are not contained in the boundary path of $G_1$. For the vertices $v_1$, $v_2$ the statement follows by the choice of $N$. For any other vertex $v\in V_3$,  the vertex degree ${|v|}_3$ in $G_3$ is equal to $\sum_{c\in C_v(G_3)}|c|=2(|v|-1)$, which yields the statement. Finally, we have ${|v|}_{3}\geq3$ for all $v\in V_3$ since we assumed that all extended edges in $G$ are regular and all faces contained in $F_3$ are bounded.\\
In the following, we will not distinguish between the objects we identified in (O1), (O2), (O3).  An obvious consequence of (O1), (O2) and (O3) is that for a corner $c\in C(G_3)$ and the corresponding corner $c\in C(G)$ we have
$$\ka_C^{G_3}(c)\leq \ka^G_C(c).$$

We now find  $F'\subseteq F$ that satisfies $\ka^{G}_F(F')>0$.
Denote the unbounded face in
$F_1$ by $f_\infty$ and set $F'=F_1\setminus\{f_\infty\}$ which we can identify with a subset of $F$ and $2N$ identical subsets of $F_3$ by (O2). Thus, we conclude by the Gauss-Bonnet formula and the inequality above
\begin{align*}
2&=\ka_{V}^{G_3}(V_3)=\sum_{c\in C(G_3)}|c|\ka_C^{G_3}(c)
=2N\sum_{f\in F'}\sum_{c\in C_f(G_3)}|c|\ka_C^{G_3}(c)\\
&\leq 2N\sum_{f\in F'}\sum_{c\in C_f(G)}|c|\ka_C^G\ab{c} =2N\sum_{f\in F'}\ka_F^G\ab{f}.
\end{align*}
Therefore, $\ka_F^G(F')\geq {1}/{N}>0$.

To find $V'\subseteq V$ with $\ka^{G}_V(V')>0$, we assume that all vertices $v\in V$ that correspond to a vertex in the boundary path of $G_1$ satisfy $\ka^{G}_V(v)\leq0$. Otherwise, we have $\ka_V^G(V')>0$ for $V'=\{v\}$.
We first show $\ka^{G_3}_V(v)\leq\ka_V^{G}(v)$ for $v\in V_{3}$.
To do so, we check three cases:
Firstly, the statement is clear for vertices $v$ that are not contained in the boundary path $p$ of $G_1$.
Secondly,
for $v\in V_3\setminus\{v_1,v_2\}$ that is contained in the boundary path of $G_1$, we have  ${|v|}_{3}=2(|v|-1)$. Hence,
\begin{align*}
\ka^{G_3}_V(v)&=1-\frac{{|v|}_{3}}{2}+ \hspace{-.cm}
\sum_{(v,f)\in C_v(G_3)} \hspace{-.cm} |(v,f)|\frac{1}{|f|}\\
&=2-|v|+\hspace{-.cm}
\sum_{
(v,f)\in C_v(G)}
\hspace{-.cm}
2|(v,f)|\frac{1}{|f|}-\frac{2}{|g|}
=2\ka_{V}^{G}(v)-\frac{2}{|g|}\leq\ka_{V}^{G}(v).
\end{align*}
where $g\in F$ is the unique face adjacent to $v$ in $G$ for which there is no corresponding face in $G_3$. Note that $g$ corresponds to $f_\infty $ in $G_1$ and $|(v,g)|=1$.  The second equality follows since for any other corner in $C_v(G)$ there are exactly two corresponding corners with the same multiplicity in $C_v(G_3)$. The last inequality is due to the assumption $\ka_{V}^{G}(v)\leq0$ for vertices $v$ contained in $p$.
Thirdly, we have to check  $\ka^{G_3}_V(v_j)\leq\ka^G_V(v_j)$ for $j=1,2$. In the following inequality, we estimate $|f|\geq3$, then use $\sum_{c\in C_{v_j}(G_{3})}|c|={|v_j|}_{3}=2N({|v_j|}_1-1)\geq3|v_j|$ from the definition of $N$, to obtain
\begin{align*}
\ka^{G_3}_V(v_j)=1-\frac{{|v_{j}|}_{3}}{2}+ \hspace{-.4cm}
\sum_{(v_{j},f)\in C_{v_{j}}(G_3)} \hspace{-.5cm} |(v_{j},f)|\frac{1}{|f|}\leq 1-\frac{{|v_j|}_{3}}{6}\leq 1-\frac{|v_j|}{2}.
\end{align*}
As $\ka^G_V(v_j)\geq1-{|v_j|}/{2}$, we have $\ka^G_V(v_j)\geq\ka^{G_3}_V(v_j)$ for $j=1,2$. Thus, we have shown for all $v\in V_3$
$$\ka^{G_3}_V(v)\leq\ka_V^{G}(v).$$
We finish the proof by identifying a subset $V'\subseteq V_3$ that satisfies $\ka^{G_3}_V(V')>0$ and, by the identification (O3), $\ka^{G}_V(V')>0$.
Let $V'\subset V$ be the vertices of $G_1$ that are not in the vertex set $V_p$ of the boundary path $p$ of $G_1$.
The Gauss-Bonnet formula and (O3) yield, since we assumed $\ka_{V}^{G}(v)\leq 0$ for $v\in V_p$
$$2=\ka^{G_3}_V(V_3)
=2N\sum_{v\in V'} \ka^{G_3}_V(v)+N\sum_{v\in V_p} \ka^{G_3}_V(v)+\sum_{j=1,2}\ka^{G_3}_V(v_j)\leq 2N\ka^{G}_V(V').$$
Therefore, $\ka_V^G(V')\geq\frac{1}{N}>0$. If $V'\neq\emptyset$, then we are done. Otherwise, we arrived at a contradiction to the assumption $\ka_{V}^{G}(v)\leq0$ for $v\in V_p$. Hence,  we conclude $\ka_{V}^{G}(V'')>0$ for $V''=\{v\}$ for the vertex $v$ that gives the contradiction.
\end{proof}
\end{Lemma}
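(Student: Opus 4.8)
The plan is to manufacture a finite graph on the sphere whose curvature is, face-by-face and vertex-by-vertex, no larger than that of $G$, and then to invoke Gauss-Bonnet to force positivity somewhere. Since $D(F)\neq\emptyset$, Lemma~\ref{l:nonpos_subgraph} supplies a finite connected subgraph $G_1=(V_1,E_1)$ bounded by a simply closed path $p$, together with at most two vertices $v_1,v_2$ on $p$ through which every edge joining $V_1$ to $V\setminus V_1$ must pass. The point of these cut vertices is that the local picture of $G$ around every vertex and face of $G_1$ other than $v_1,v_2$ is intrinsic to $G_1$: its vertex and face degrees inside $G_1$ agree with those in $G$. I would exploit this by reassembling copies of $G_1$ into a closed surface without disturbing that local picture away from the two marked vertices.

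Concretely, I would cut the boundary path $p$ at $v_1$ and $v_2$ into two arcs $p_1,p_2$, take two copies of $G_1$ and glue them along $p_2$ to obtain a graph $G_2$ bounded by two copies of $p_1$; then chain $N$ copies of $G_2$ cyclically, gluing successive $p_1$-arcs and closing the cycle, and embed the result into $\Sp^2$ as a graph $G_3$. The bookkeeping is the crucial but routine part: every bounded face of $G_1$ (and hence the corresponding face of $G$) occurs exactly $2N$ times in $G_3$ with unchanged degree; every corner interior to $G_1$ likewise occurs $2N$ times with the same multiplicity; and a boundary vertex $v\neq v_1,v_2$ acquires degree $2(|v|-1)$ in $G_3$, since gluing doubles all its incident corners except the single one that faced the removed outer face. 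All of this uses only that $G_1$ has a simply closed boundary and that $v_1,v_2$ are the sole cut vertices.

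With this dictionary in hand the two positivity statements fall out of the Gauss-Bonnet Proposition applied to $G_3$, which gives $\ka^{G_3}_V(V_3)=2$. For faces, I would note that increasing a vertex degree only decreases corner curvature, so the corresponding corners satisfy $\ka^{G_3}_C(c)\le\ka^G_C(c)$; summing over the $2N$ copies of the bounded faces $F'=F_1\setminus\{f_\infty\}$ turns $2=\ka^{G_3}_V(V_3)$ into $2\le 2N\,\ka^G_F(F')$, whence $\ka^G_F(F')\ge 1/N>0$. For vertices I would argue by contradiction: if some boundary vertex of $G_1$ already has positive curvature in $G$ we are done with $V'$ a single vertex, so I may assume every boundary vertex is non-positively curved, and then check $\ka^{G_3}_V(v)\le\ka^G_V(v)$ for all $v\in V_3$. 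Feeding this into Gauss-Bonnet and discarding the non-positive boundary contributions leaves $2\le 2N\,\ka^G_V(V')$ for $V'$ the interior vertices of $G_1$, giving positivity; and if $V'$ turns out empty, the very vertex producing positivity contradicts the assumption, again yielding a singleton $V'$.

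The real obstacle is the two exceptional vertices $v_1,v_2$, whose degree is not merely doubled but grows with the number of copies, so their curvature is not automatically comparable to that in $G$. I would resolve this exactly by the choice of $N$: taking $N\ge 3|v_j|/\bigl(2(|v_j|_1-1)\bigr)$ forces the degree of $v_j$ in $G_3$ to be at least $3|v_j|$, and then, using that every face of $G_3$ is a polygon so $1/|f|\le 1/3$, one bounds $\ka^{G_3}_V(v_j)\le 1-|v_j|_3/6\le 1-|v_j|/2\le\ka^G_V(v_j)$. Two hypotheses are doing quiet but essential work here: regularity of all extended edges together with the absence of infinite faces in $G_3$ guarantees every vertex of $G_3$ has degree at least three (so the face-degree bound $|f|\ge 3$ is legitimate), and simplicity keeps the combinatorial identities for $|v|$ and $|f|$ clean. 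Once $v_1,v_2$ are controlled, the two Gauss-Bonnet estimates close the argument.
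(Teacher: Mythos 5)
Your proposal is correct and follows essentially the same route as the paper: the same copy-and-paste construction ($2N$ copies of $G_1$ glued along the boundary arcs and embedded into $\Sp^2$), the same bookkeeping of corners, faces and boundary-vertex degrees, the same choice of $N$ to tame $v_1,v_2$, and the same two Gauss-Bonnet estimates, including the contradiction argument when the interior vertex set is empty. No substantive differences to report.
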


The next lemma shows that the absence of degenerate faces and degenerate pairs characterizes whether a ''nice`` graph is locally tessellating.
\begin{Lemma} \label{l:non-pos_subtess_DF}
\emph{Let $G$ be a simple,  planar graph that is connected,  locally finite, contains no terminal vertices and each extended edge is regular. Then, $G$ is locally tessellating if and only if $D(F)=\emptyset$. In this case, each corner has multiplicity one and
$$\ka^G_V(v)=1-\frac{|v|}{2}+\sum_{f\in F, v\in f} \frac{1}{|f|}\qquad\mbox{for all $v\in V$}.$$}
\begin{proof}Obviously, if $D(F)$ is non-empty, then at  least one of the conditions (T1), (T2*), (T3*)  on page~\pageref{T2*} is violated. On the other hand, the absence of degenerate faces implies that each corner has multiplicity one. This has the following consequences: Firstly, no edge can be included in only one face which is (T1). Secondly, the boundary of each face is a path (i.e., there is a boundary walk of the face which is meeting every vertex only once) and, hence, (T3*) follows. Thirdly, the formula for the curvature now follows  from the definition. Finally, the absence of degenerate pairs of faces implies (T2*) as extended edges are assumed to be regular.
\end{proof}
\end{Lemma}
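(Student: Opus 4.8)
The plan is to prove the equivalence in both directions, treating the ``only if'' implication by contraposition and the ``if'' implication constructively, and then to read off the curvature formula for free.

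For the contrapositive of ``locally tessellating $\Rightarrow D(F)=\emptyset$,'' I would assume $D(F)\neq\emptyset$ and exhibit a violated axiom in each case. If a degenerate pair $(f,g)$ occurs, then $f\cap g$ has at least two connected components, which immediately contradicts (T2*), since a vertex, an edge and a regular extended edge are all connected. If instead some face $f$ is degenerate, witnessed by a vertex $v$ with $|(v,f)|\geq 2$, then either an edge of $f$ lies in no other face---contradicting (T1)---or every boundary walk of $f$ meets $v$ at least twice, so the boundary of $f$ is not a simply closed path and $f$ is neither a polygon nor an infinigon, contradicting (T3*). Thus $D(F)\neq\emptyset$ rules out local tessellation.

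For the converse, assume $D(F)=\emptyset$. The first step is to observe that the absence of degenerate faces forces every corner $(v,f)$ to have multiplicity one: one has $|(v,f)|\geq 1$ because $v\in f$, and $|(v,f)|\leq 1$ by non-degeneracy. From this I extract the remaining axioms. Since an edge lying in only one face would create multiplicity at least two at its endpoints, every edge must border two faces; local finiteness and planarity cap the number of bordering faces at two, so each edge lies in precisely two distinct faces, which is (T1). Next, multiplicity one means a shortest boundary walk of a face meets each of its vertices exactly once, i.e.\ the boundary is a simple path; Jordan's curve theorem together with its Schoenflies strengthening then identifies a bounded such face with the closed disc (a polygon) and an unbounded one with $\R^2\setminus\D$ or a half-plane (an infinigon), which is (T3*). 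Finally, inserting $|(v,f)|=1$ into the general identity $\ka^G_V(v)=1-|v|/2+\sum_{(v,f)\in C_v(G)}|(v,f)|/|f|$ collapses the sum over corners at $v$ into a sum over the faces containing $v$, since these are now in bijection, yielding the stated formula.

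It remains to deduce (T2*) from the absence of degenerate pairs, and this is where I expect the real work to lie. Given faces $f,g$ with $f\cap g\neq\emptyset$, non-degeneracy of the pair makes $f\cap g$ connected, and as a connected subcomplex of the two simple boundary paths it must be a single vertex, a single edge, or a path of at least two edges. In the last case I would argue that every inner vertex $w$ of the path has degree two---otherwise a third edge at $w$ would place $f$ or $g$ on both of its sides, reviving a corner of multiplicity at least two and contradicting the previous step---so the path is an extended edge, whereupon the standing hypothesis that all extended edges are regular turns it into a regular extended edge. The main obstacle is precisely this topological bookkeeping: converting the combinatorial statement ``multiplicity one'' into the geometric conclusions that boundary walks are simple paths, that the faces are honestly polygons or infinigons (the Jordan--Schoenflies step), and that a connected intersection of two faces is exactly a subpath with degree-two interior, so that regularity can be invoked without gaps.
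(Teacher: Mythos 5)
Your proposal is correct and follows essentially the same route as the paper: both directions are handled by first extracting corner multiplicity one from the absence of degenerate faces, deducing (T1), (T3*) and the curvature formula from it, and then obtaining (T2*) from the absence of degenerate pairs together with the regularity of extended edges. You merely make explicit several steps the paper leaves as ``obvious'' (the Jordan--Schoenflies identification and the degree-two argument for inner vertices of $f\cap g$), which is fine.
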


Next, we show that non-positive curvature implies that the graph is ''nice``. The proof follows from straightforward calculation.

\begin{Lemma} \label{l:nonpos_cases}
\emph{Let $G$ be a   planar graph that is connected and  locally finite.
\begin{itemize}
\item [(1.)] If $\ka_C(G)\leq0$, then $G$ is simple, admits no terminal vertices and each extended edge is regular. If $\ka_C(G)<0$, then there are no extended edges.
\item [(2.)] If $\ka_V(G)\leq0$, then $G$ admits no terminal vertices and each extended edge is regular. If $\ka_V(G)<0$, then there are no extended edges.
\item [(3.)] If $\ka_F(G)\leq0$, then $G$ is simple.
\end{itemize}}
\end{Lemma}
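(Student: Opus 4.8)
The plan is to prove each implication in its contrapositive form: I will show that any feature forbidden by the conclusion -- a terminal vertex, a loop, a pair of multiple edges, or a non-regular extended edge -- forces a corner, vertex or face of positive curvature, so that $\ka_C(G)$, $\ka_V(G)$ or $\ka_F(G)$ becomes positive. The unifying principle is that each such feature produces an object of small degree, and small degree means positive curvature; most of the work is a direct evaluation of $\ka_C^G(v,f)=\frac{1}{|v|}-\frac12+\frac{1}{|f|}$ and of the formulas for $\ka_V^G$ and $\ka_F^G$ recorded in Section~\ref{s:Def}.

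I would first treat the features that are handled by a one-line computation. If $v$ is a terminal vertex then $|v|=1$, so every corner at $v$ has $\ka_C^G(v,f)=\frac12+\frac{1}{|f|}\ge\frac12$ and $\ka_V^G(v)\ge\frac12$; this excludes terminal vertices under hypothesis (a.) or (b.). For the extended-edge claims I would look at an inner vertex $v$ of an extended edge, which has $|v|=2$; then $\ka_C^G(v,f)=\frac{1}{|f|}$ for each incident corner and $\ka_V^G(v)=\sum_{(v,f)\in C_v(G)}|(v,f)|\frac{1}{|f|}$, a sum of non-negative terms. Under $\ka_C(G)\le0$ or $\ka_V(G)\le0$ this forces every face incident to $v$ to have infinite degree, so the two faces flanking the extended edge are infinite and the extended edge is regular. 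Under a \emph{strict} inequality the quantity $\frac{1}{|f|}$ would have to be negative, which is impossible; hence no vertex of degree two, and a fortiori no extended edge, can occur. This yields the extended-edge assertions in (1.) and (2.) together with both strictness addenda.

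It remains to prove simplicity in (1.) and (3.), and this is the step I expect to be the main obstacle. A loop at $v$ is a simple closed curve and a pair of multiple edges between $v,w$ is a bigon; in either case Jordan's theorem encloses a bounded region $D$, which by local finiteness contains only finitely many vertices and, by connectedness, is attached to the rest of $G$ only through $v$ (resp. through $v,w$ and the two edges $e_1,e_2$). If $D$ contains no further vertices it is a single face of degree $1$ (loop) or $2$ (bigon), and then one reads off directly that the relevant corner has $\frac{1}{|f|}\ge\frac12$ and that $\ka_F^G(f)>0$, contradicting $\ka_C(G)\le0$ resp. $\ka_F(G)\le0$. The delicate case is when $D$ is subdivided, for then there is no small face to exhibit and the naive degree count fails.

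To handle the subdivided case the plan is to choose the enclosing configuration with the \emph{fewest} enclosed vertices, so that the interior of $D$ contains no loop and no multiple edge and is therefore simple (whence every face of $D$ has degree at least three), and then to reflect $\overline{D}$ across its boundary: gluing two copies along the loop edge, respectively along $e_1,e_2$, produces a finite graph on $\Sp^2$ in which the one or two boundary vertices have become interior. Applying the combinatorial Gauss--Bonnet formula gives total vertex curvature $2$. Every genuinely interior vertex retains its degree and its incident faces, so by $\ka_C(G)\le0$ (which gives $\ka_V^G\le0$ pointwise) it contributes at most $0$; hence the glued boundary vertices must carry curvature at least $2$ in total. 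On the other hand, a direct count of the degree of each glued vertex and of the degrees of the faces it meets shows that its curvature in the doubled graph equals $1$ minus a strictly positive quantity, so is strictly less than $1$ (positivity uses that all faces of $D$ have degree at least three). With at most two glued vertices the total is strictly less than $2$ -- the desired contradiction -- which rules out loops and multiple edges and establishes simplicity in (1.). Part (3.) follows from the same scheme carried out with the face curvature and the dual form $\sum_f\ka_F^G(f)=2$ of the formula. The real difficulty throughout lies exactly in this subdivided case, where one must pass to the doubled sphere and balance the boundary contribution against the non-positive interior by means of Gauss--Bonnet.
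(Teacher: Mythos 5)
The computational parts of your argument are correct and are exactly the ``straightforward calculation'' the paper alludes to (the paper offers no proof beyond that remark): a terminal vertex has $|v|=1$ and hence corner and vertex curvature at least $1/2$; an inner vertex of an extended edge has $|v|=2$, so $\ka_C^G(v,f)=1/|f|\ge 0$, forcing $|f|=\infty$ under non-positivity and an outright contradiction under strict negativity. Your treatment of simplicity in (1.) is also sound, and you are right that the subdivided disk is the genuine difficulty. Two small remarks there: under $\ka_C(G)\le 0$ every corner gives $1/|f|\le 1/2-1/|v|<1/2$, hence $|f|\ge 3$ directly, so the minimal choice of $D$ is not needed for (1.) (and ``interior simple'' would not by itself give $|f|\ge3$ anyway, because of bridges and pendant edges); and the doubling is optional -- applying Gauss--Bonnet to the graph $G_1=G\cap\overline{D}$ itself, whose outer face has degree $1$ or $2$, yields the same contradiction. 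Either way the key point, that interior vertices retain degree, incident faces and hence their non-positive curvature while each glued boundary vertex carries curvature $1-|v|_d/2+\sum|c|/|g|\le 1-|v|_d/6<1$, is correct.

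The gap is in part (3.). The face-curvature analogue of ``interior objects retain their curvature'' is false: $\ka_F^G(f)=1-|f|/2+\sum_{(u,f)}|(u,f)|/|u|$ depends on the degrees of the incident \emph{vertices}, and when you pass to the doubled sphere the degree of a glued vertex $v$ drops from $|v|_G$ to $|v|_d=2+2k_v$ (with $k_v$ the number of edges from $v$ into the open disk), which can be far smaller than $|v|_G$ if $v$ has many edges leaving $D$. Consequently every interior face incident to $v$ or $w$ has \emph{strictly larger} curvature in the doubled graph, and you cannot assert that the interior faces contribute non-positively -- the step on which your scheme for (1.) rests. The argument is repairable, but only by computing the total discrepancy: summing $\ka_F^{d}(f)-\ka_F^{G}(f)=\sum_{u\in\{v,w\}}|(u,f)|\bigl(1/|u|_d-1/|u|_G\bigr)$ over the interior faces of one copy, and using $\sum_f|(v,f)|=1+k_v=\tfrac12|v|_d$, gives total discrepancy $1-(1+k_v)/|v|_G-(1+k_w)/|w|_G$, while the face form of Gauss--Bonnet assigns total curvature $1$ to one copy's interior faces; hence $\sum_{\mathrm{int}}\ka_F^G(f)=(1+k_v)/|v|_G+(1+k_w)/|w|_G>0$ and some face of $G$ is positively curved (the loop case is analogous and yields $1/2+(1+k_v)/|v|_G>0$). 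This cancellation is the missing idea; ``the same scheme'' does not transfer verbatim from vertices to faces.
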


We come now to the proof of Theorem \ref{t:nonpos}.

\begin{proof}[Proof of Theorem \ref{t:nonpos}.]
If $G$ is finite, then by the Gauss-Bonnet formula, it must admit some positive curvature. \\
Lemma~\ref{l:nonpos_cases} implies that the assumptions of Lemma~\ref{l:non-pos_CopyPaste} are satisfied. Hence, by Lemma~\ref{l:non-pos_CopyPaste}, we have $D(F)=\emptyset$ and, by Lemma~\ref{l:non-pos_subtess_DF}, we obtain
that $G$ is locally tessellating. Moreover, by Lemma~\ref{l:nonpos_cases}, in the case of $\ka_C(G)<0$ or $\ka_V(G)<0$ the graph $G$ admits no extended edges and, thus, $G$ is  strictly locally tessellating.
\end{proof}

\section{Embedding of a locally tessellating graph into a tessellation}\label{s:LocTessVsTess}

In this section, we construct an embedding of a locally tessellating graph into a tessellating supergraph, which leaves crucial properties of a subset of vertices invariant. This embedding allows us  to carry over many results for tessellations to planar graphs in the forthcoming sections. We start the section with an extension of a proposition of Higuchi \cite{H},  to planar graphs. Then, after presenting the construction, we will extract some important properties of the tessellating supergraph.

\begin{Proposition}\label{l:Higuchi} \emph{Let $G$ be a simple,  planar graph that is connected and locally finite. If $\ka_{V}^{G}(v)<0$ for all $v\in V$, then $\ka_{V}(G)\leq -1/1806$. The maximum is achieved for vertices  with degree $3$ that have adjacent faces with degrees exactly $3$, $7$ and $43$.}
\begin{proof}
By Theorem~\ref{t:nonpos} every non-negatively curved graph is locally tessellating. In \cite[Proposition~2.1]{H} it is shown that $\ka_V^G<0$ on $V$ implies $\ka_V^G\leq-1/1806$ on $V$   for tessellating graphs and that the maximum is achieved as it is claimed above. The proof consists of a list of all relevant cases. The list is ordered by the vertex degree $n$ and a vector $(l_1,\ldots,l_n)$ where $l_1\leq\ldots\leq l_n$ are the degrees of the faces adjacent to the vertex.
Since we allow for unbounded faces, we have to check some additional cases: For $n\ge5$, the curvature for $(l_1,\ldots,l_4,\infty)$ with $l_1,\ldots,l_4\ge3$ is smaller or equal to $-1/6$.
For $n=4$, the curvature for $(3,3,3,\infty)$ is zero and for $(l_1,l_2,l_3,\infty)$ with $l_1,l_2\geq3$ and $l_3>3$ the curvature is smaller or equal to $-1/12$.  For $n=3$, the curvature of $(3,6,\infty)$ and $(4,4,\infty)$ is zero, while the curvature of $(l_1,l_2,\infty)$ with $l_1\geq 3$, $l_2>6$ is smaller or equal to $-1/42$ and with $l_1\geq 4$, $l_2>4$ it is smaller or equal to $-1/20$.
\end{proof}
\end{Proposition}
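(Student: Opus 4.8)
The plan is to turn the statement into a finite, elementary optimization over unit fractions. Since $\ka_V^G(v)<0$ for every $v$ we have $\ka_V(G)\leq 0$, so Lemma~\ref{l:nonpos_cases}(2.) shows that $G$ has no terminal vertices and that every extended edge is regular; by Theorem~\ref{t:nonpos} the graph is therefore locally tessellating, and Lemma~\ref{l:non-pos_subtess_DF} gives that every corner has multiplicity one together with the formula $\ka_V^G(v)=1-\frac{|v|}{2}+\sum_{f\in F,\,v\in f}\frac{1}{|f|}$. Writing $n=|v|$ and letting $l_1\leq\dots\leq l_n$ be the degrees of the faces around $v$ (each $l_i\in\{3,4,\dots\}\cup\{\infty\}$ with $1/\infty=0$), this reads $\ka_V^G(v)=1-\frac n2+\sum_{i=1}^n\frac{1}{l_i}$. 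A degree-two vertex would be an inner vertex of an extended edge, hence (by regularity) have two infinite adjacent faces and curvature $0$, contradicting strict negativity; so $n\geq 3$. The proposition thus amounts to maximizing this expression subject to its being negative, and to identifying the maximizer.

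Next I would reduce to finitely many cases. Since each $l_i\geq 3$ we get $\ka_V^G(v)\leq 1-\frac n6$, which is $\leq-\frac16$ once $n\geq 7$; a one-line refinement for $n=6$ (where negativity forces some face of degree $>3$) gives the bound $-\frac1{12}$. An infinite face contributes $0$ in place of a positive term $1/l_i$, so infinigon configurations are pushed toward more negative curvature; a short direct check shows the only ones with $\ka_V^G=0$ are $(3,6,\infty)$ and $(4,4,\infty)$ (both excluded by strict negativity) and that every other infinigon case has $\ka_V^G\leq-\frac1{42}$. This is exactly the extra content beyond Higuchi's list in \cite[Proposition~2.1]{H}, which treats the purely finite (tessellation) situation and which one may alternatively simply cite for that part. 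There remain the all-finite cases $n\in\{3,4,5\}$.

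The heart is $n=3$, where negativity reads $\frac1a+\frac1b+\frac1c<\frac12$ with $3\leq a\leq b\leq c$, and we seek this sum as close to $\frac12$ from below as possible. If $a\geq 7$ the sum is at most $\frac37$, so $\ka_V^G\leq-\frac1{14}$; hence $a\in\{3,4,5,6\}$. For each such $a$, minimizing the gap $\frac12-(\frac1a+\frac1b+\frac1c)$ is a search over a bounded range of $b$ (and then the smallest legal $c$), and a direct computation gives smallest gaps $\frac1{1806},\frac1{420},\frac1{120},\frac1{42}$ for $a=3,4,5,6$. The winner $a=3$ forces $b=7$, $c=43$ through the Sylvester identity
$$\frac13+\frac17+\frac1{43}=\frac12-\frac1{1806},\qquad 1806=2\cdot 3\cdot 7\cdot 43,$$
so the denominators are exactly the Sylvester sequence $2,3,7,43$. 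The parallel (shorter) analyses for $n=4$ and $n=5$ give maxima $-\frac1{156}$ (at $(3,3,4,13)$) and $-\frac1{42}$ (at $(3,3,3,3,7)$), both below $-\frac1{1806}$. Comparing all cases, the maximum of $\ka_V^G$ is $-\frac1{1806}$, attained precisely at degree-three vertices whose adjacent faces have degrees $3,7,43$, giving $\ka_V(G)\leq-\frac1{1806}$.

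I expect the main obstacle to be the extremal argument in the $n=3$ case: one must establish that no admissible triple does better than $(3,7,43)$, rather than merely bound the curvature away from zero. The subtle point is that the naive greedy choice of denominators is not always optimal within a branch (for instance, for $a=5$ the triple $(5,6,8)$ beats the greedy $(5,5,11)$), so one genuinely has to examine the full bounded family for each $a$ rather than follow the greedy expansion blindly; it is only the global comparison that happens to single out the greedy expansion $2,3,7,43$ of $\frac12$. Keeping the case list complete while exploiting $a\leq 6$ and the boundedness of $b$ to keep it finite is where the care is needed.
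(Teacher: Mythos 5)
Your argument is correct and follows essentially the same route as the paper: reduce via Theorem~\ref{t:nonpos} and Lemma~\ref{l:non-pos_subtess_DF} to the explicit curvature formula at a vertex, then optimize over the finitely many admissible patterns $(n;l_1,\dots,l_n)$. The only substantive difference is that you carry out the all-finite optimization (the unit-fraction analysis culminating in $\frac13+\frac17+\frac1{43}=\frac12-\frac1{1806}$) by hand, whereas the paper simply cites Higuchi's Proposition~2.1 for that part and only verifies the additional infinigon cases; your version is more self-contained, and your observation that the greedy choice is not optimal within every branch (e.g.\ $(5,6,8)$ beating $(5,5,11)$) is exactly the care the case list requires. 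One small slip: your list of zero-curvature infinigon configurations omits $(3,3,3,\infty)$, which the paper does record for $n=4$; since that case is excluded by the strict negativity hypothesis anyway, and every infinigon configuration with strictly negative curvature still satisfies $\ka_V^G\le-\frac1{42}$, this does not affect the conclusion, but the sentence ``every other infinigon case has $\ka_V^G\le-\frac1{42}$'' is false as literally written.
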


We now come to the construction of the embedding. Let $G=(V,E,F)$ be a simple,  locally tessellating graph that satisfies $\ka_{V}(G)\le0$.
Let $W\subseteq V$ be a finite set of vertices \emph{simply connected}, i.e., both subgraphs $G_W$ and $G_{V\setminus W}$ are connected. The construction consists of two steps. In the first step, we add binary trees to certain vertices. In the second step, we close the unbounded faces by adding ''horizontal`` edges.

\textbf{Step 1:} To any vertex $v$ with $\ka_{V}^{G}(v)=0$ that is adjacent to $n\geq1$ infinigons, we attach $n$ binary trees. To do so, we  embed every one of these tree into a different infinigon and then connect the roots of the trees and $v$ by edges.
With slight abuse of notation, we denote the face set of the resulting graph also by $F$.

\textbf{Step 2:} We choose the closing parameter, that is the size at which unbounded faces are closed by a ''horizontal`` edge. Let
$\mbox{diam}(W):=\max_{v,w\in W}d(v,w)$. For $\vareps>0$ define
$$R_\eps:= \max\ac{6,2{\mbox{diam}}(W),(2+\min_{v\in V}|v|)\frac{1}{\eps}}.$$
By induction over $n\in\N$, we perform the following procedure: For every unbounded face $f\in\dd_F B_n(W)$, we connect the two vertices in $f\cap S_n(W)$ by an edge whenever $\av{f\cap B_n(W)} > R_\eps$. (Note that the uniqueness of the two vertices follows since extended edges are regular.)  We denote, with slight abuse of notation, the face set of the modified graph after each induction step again by $F$.

This yields a limiting graph that we denote by $G'=G'_\eps=(V',E',F')$. Obviously, $G'$ is a super-graph of $G$, i.e., $V\subseteq V'$, $E\subseteq E'$. Therefore, it is natural to talk about corresponding vertices in $V$ and $V'$. In particular, we will denote for a vertex $v\in V$ the corresponding vertex in $V'$ by $v'$ and for a subset $W\subseteq V$ we denote the corresponding subset in $V'$ by $W'$.

\begin{Thm}\label{t:embedding} \emph{Let $G$ be a simple, connected, locally tessellating graph that satisfies $\ka_{V}(G)\leq0$, $W\subset V$ be finite and simply connected and $\eps>0$. Then, the graph $G'=G_\eps'$ constructed above is a tessellation and satisfies the following assertions:
\begin{itemize}
\item [(G1)] If $v\in W$ is not adjacent to an infinigon or $\ka_{V}^{G}(v)<0$, then $|v|=|v'|$. Otherwise, $|v'|=|v|+n$, where $n$ is the number of adjacent infinigons. Moreover, if
    $\ka^{G}_{C}(v,f)\le0$ for all $(v,f)\in C_v(G)$, then edges are added to $v$ if and only if $v$ is the inner vertex of an extended edge.
\item [(G2)] The embedding of $G$ into the supergraph $G'$ is a graph isomorphism of the subgraphs $G_W$ and $G_{W'}$, (i.e., the adjacency relations of corresponding vertices $W$ and $W'$ remain unchanged). If $\ka_{V}^{G}<0$ on  $W$, then the embedding is even a graph isomorphism of the subgraphs $G_{B_1^G(W)}$ and $G_{B_1^{G'}(W')}$.
\item [(G3)] The distance of two vertices $v,w\in W$  in $G$ equals the distance of the corresponding vertices $v',w'\in W'$ in $G'$.
\item [(G4)] If $\ka_C(G)\leq 0 $, then $\ka_C(G')\leq \min\{ 0,\ka_C(G)+\vareps\}$.
\item [(G5)] If $\ka_V(G)\leq 0 $, then $\ka_V(G')\leq    \min\{0, \ka_V(G)+\vareps\}$ whenever $\eps\in(0,1/1806)$.
\end{itemize}}
\end{Thm}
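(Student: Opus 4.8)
The plan is to first show that $G'$ is a tessellation and then to read off (G1)--(G5) from a careful analysis of the two construction steps. Since $G$ is locally tessellating, Lemma~\ref{l:non-pos_subtess_DF} gives that it already satisfies (T1), (T2*) and (T3*), so the only obstructions to (T2) and (T3) are the regular extended edges and the infinigons; I would argue that Step~1 removes the former and Step~2 the latter. The decisive point for Step~1 is a corner computation: if $v$ is adjacent to an infinigon $f$, has $\ka_V^G(v)=0$, and satisfies $\ka_C^G(v,g)\le0$ at every corner, then the sum defining $\ka_V^G(v)$ forces $\ka_C^G(v,f)=0$, hence $1/|v|-1/2=0$ and $|v|=2$; thus $v$ is a degree-two inner vertex of a regular extended edge adjacent to exactly two infinigons. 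Attaching the two binary trees raises its degree and destroys the extended edge, which is what finally yields (T2) and the ``moreover'' clause of (G1). By Lemma~\ref{l:nonpos_cases}, strict negativity leaves no extended edges at all, so in that regime Step~1 adds nothing.

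To complete the tessellation property I would analyse the filled and closed infinigons. A tree contains no cycle, so after Step~1 each infinigon is divided by its binary tree into unbounded ``wedge'' faces, each meeting every sphere $S_n(W)$ in exactly two vertices; the regularity of the extended edges is exactly what makes this pair of vertices unique, as already noted in the construction. The inductive closing then turns each wedge into a sequence of finite polygons, so that every edge---original, tree, or horizontal---ends up bounding precisely two finite faces, giving (T1) and (T3). Moreover, each closing is triggered only when $|f\cap B_n(W)|>R_\eps$, so every newly created finite face has degree exceeding $R_\eps$. For (G1) it then remains to observe that a vertex of $W$ never receives a horizontal edge: these are placed only on spheres $S_n(W)$ with $n\ge1$, and the threshold $R_\eps\ge2\,\mathrm{diam}(W)$ forces every closing to occur at radius larger than $\mathrm{diam}(W)$. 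Hence the degree of $v\in W$ changes only through Step~1, giving $|v'|=|v|$ unless $\ka_V^G(v)=0$ and $v$ is adjacent to $n\ge1$ infinigons, in which case $|v'|=|v|+n$.

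The statements (G2) and (G3) would follow from the same localization. No edge is ever deleted and no edge is added inside $W$: Step~1 joins a vertex of $W$ only to new tree roots, which sit at distance two, and Step~2 joins pairs of vertices lying on a common sphere $S_n(W)$ with $n\ge1$; hence $G_W$ and $G_{W'}$ are isomorphic. If $\ka_V^G<0$ on $W$, no vertex of $W$ carries a tree, and since every new vertex appears at distance at least two, the isomorphism extends to $B_1(W)$. For (G3) I would use that $G'$ is a supergraph of $G$, so distances can only decrease, while any $G'$-path between two vertices of $W$ that uses a new edge must either run into a tree (a dead end towards infinity) or cross a horizontal edge at radius exceeding $\mathrm{diam}(W)$ and so be strictly longer than the corresponding $G$-geodesic. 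This is precisely the role of $R_\eps\ge2\,\mathrm{diam}(W)$, so no shortcut is created and distances are preserved.

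For the curvature bounds I would compute the degrees of the new cells. Every new vertex is a tree vertex of degree at least three, and every finite face produced by the closing has degree exceeding $R_\eps\ge6$; hence each new corner $(v,f)$ satisfies $\ka_C^{G'}(v,f)=1/|v|-1/2+1/|f|\le 1/3-1/2+1/R_\eps\le0$, which is the non-positive half of (G4) and (G5). Crucially, trees, and with them these new vertices, are created only when $\ka_C(G)=0$ (resp.\ $\ka_V(G)=0$), in which case the target value $\min\{0,\cdot\}$ equals $0$ and the bound just obtained suffices; under strict negativity no new vertices appear at all (Lemma~\ref{l:nonpos_cases}). For an old corner whose adjacent infinigon has become a finite face, the curvature rises only by $1/|f|<1/R_\eps\le\eps/(2+\min_{v}|v|)\le\eps$, giving (G4); this is exactly where the term $(2+\min_{v}|v|)/\eps$ in $R_\eps$ enters. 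The passage to the vertex bound (G5) is the delicate step, since one must sum the corner perturbations at a vertex $v$ adjacent to $m$ infinigons: when $m\le2+\min_{v}|v|$ the total increase is directly at most $\eps$, while for larger $m$ the large vertex degree already pushes $\ka_V^{G'}(v)$ well below the supremum, so that the bound $\ka_V(G)+\eps$ survives. The $\min\{0,\cdot\}$ in (G5) is then closed off by Higuchi's Proposition~\ref{l:Higuchi}: a vertex with $\ka_V^G(v)<0$ in fact satisfies $\ka_V^G(v)\le-1/1806$, so for $\eps\in(0,1/1806)$ the increase $m/R_\eps<1/1806$ keeps the curvature non-positive. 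I expect this curvature bookkeeping for (G5), together with the verification that the tree-and-closing pattern is genuinely a tessellation of face-degree larger than $R_\eps$, to be the main obstacle.
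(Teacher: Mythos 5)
Your proposal takes the same route as the paper: a direct case analysis of the two construction steps, using the degree bounds $|w|\geq 3$ for new vertices and $|g|>R_\eps\geq\max\{6,2\,\mathrm{diam}(W),(2+\min_{v}|v|)/\eps\}$ for new faces, the observation that trees (and hence all new vertices) occur only when the relevant curvature supremum is $0$, and Proposition~\ref{l:Higuchi} to absorb the $\eps$-perturbation in (G5). Your treatment of the tessellation property and of (G1)--(G4) matches the paper's proof in substance (one immaterial slip: the tree roots are joined to $v$ by an edge, so they sit at distance one, not two, from $v$).

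There is, however, a genuine gap in (G5). Under the hypothesis $\ka_V(G)\leq 0$ individual corners may be positive (e.g.\ a corner of a triangle at a degree-three vertex), so establishing $\ka_C^{G'}\leq 0$ for every \emph{new} corner does not give ``the non-positive half of (G5)'': the vertex curvature is the sum over \emph{all} corners of $v'$, and when an infinigon corner of $v$ (which contributes $1/|v|-1/2\leq 0$) is replaced by wedge corners, the corner-by-corner comparison adds the non-negative quantity $1/2-1/|v|$ per infinigon and must be compensated by the strict negativity of the new corners --- a computation you do not carry out. The paper avoids this by working directly at the vertex level: for a vertex receiving $n$ trees it shows $\ka_V^{G'}(v')\leq\ka_V^G(v)-(|v'|-|v|)/6$ from $|v'|=|v|+n$ and the $2n$ new faces of degree at least $6$. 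Likewise, your dichotomy for a vertex adjacent to $m$ closed infinigons is the right instinct, but the claim that for large $m$ ``the large vertex degree already pushes $\ka_V^{G'}(v)$ well below the supremum'' is exactly the assertion that needs proof: one must compare $\ka_V^G(v)\leq 1-|v|/6$ quantitatively against $\sup_u\ka_V^G(u)$, and this is not automatic. (The paper instead caps the number of Step-2 edges at $v$ by two via planarity and bounds the number of perturbed faces by $|v'|\leq|v|+2$.) So the outline is the correct one, but (G5) as written contains an invalid reduction and an unproved case.
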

\begin{proof}It is obvious from the construction that $G'$ is a tessellating graph.

(G1):  Edges are added to vertices in $W$ only in Step~1. This is exactly the case,  if the vertex $v$ is adjacent to at least one infinigon and $\ka_{V}^{G}(v)=0$. Then, as many edges are added as there are adjacent infinigons. If $\ka^{G}_{C}(v,f)\le0$ for all $(v,f)\in C_{v}(G)$ and $\ka_{V}^{G}(v)=0$  then $\ka_{C}^{G}(v,f)=0$ for all $(v,f)\in C_{v}(G)$. If  $v$  is adjacent to an infinigon, then $|v|=2$ and both adjacent faces are infinigons.

(G2): The first statement follows since we do not connect or disconnect vertices within $W$. The second one follows from the first one and (G1).

(G3): Note that, in Step~2, we add edges and create paths in $G'$ that are not in $G$.  By definition of $R_{\eps}$, such a path in $G'$ connecting vertices in $W'$ has at least the length $\mbox{diam}(W)$. Therefore, the distance of  $v',w'\in W'$ in $G'$ is at least the distance of $v,w\in W$ in $G$. Moreover, by (G2) the distance does not increase either.

(G4): We consider three types of corners: Firstly, consider $(v',f')\in C(G')$ that is the corresponding corner of some $(v,f)\in C(G)$ with $|f|<\infty$. Clearly, $\ka_{C}^{G}(v,f)\leq\ka_{C}^{G'}(v',f')$ by (G1). Secondly, let $(v',g)$ be such that $v'$ is the corresponding vertex of some vertex $v\in V$ and $g$ is created in Step~2 by closing an infinigon. Obviously, $|v'|\geq\max\{3,|v|\}$ and $|g|\geq\max\{6,1/\eps\}$ by construction and the definition of $R_\eps$. Therefore,
$\ka_C^{G'}(v',g)\leq \min\ac{{1}/{3},{1}/{|v|}} -{1}/{2}+\min\ac{{1}/{6},\vareps} \leq \min\ac{0,\ka_C(G)+\vareps}.$
Thirdly, let $(w,g)\in C(G')$  be a corner of a vertex $w$ that was added with a binary tree in Step~1. In this case, $\ka_{C}(G)=0$. Obviously, $|w|\geq 3$ by construction and $|g|\geq 6$ by definition $R_\eps$. Therefore, $\ka^{G'}_C(w,g)\leq 1/3-1/2+ 1/6=0=\ka_C(G).$

(G5): We consider three cases:
Firstly, let $v'\in V'$ be the corresponding vertex of some vertex $v$ to which $n\geq1$ binary trees were added in Step~1. Then, $n=|v'|-|v|$ by (G1). This gives a total of $2n$ new faces $g\in F'$ each of which has face degree  $|g|\geq6$ after being closed in Step~2. Therefore,
\begin{align*}
\ka_V^{G'}(v')\leq1-\frac{|v'|}{2}+\sum_{f\in F,v\in f,|f|<\infty}\frac{1}{|f|}+\frac{|v'|-|v|}{3}= \ka_V^G(v)-\frac{|v'|-|v|}{6}\leq\ka_V^G(v).
\end{align*}
Secondly, let $v'\in V'$ be the corresponding vertex of some $v\in V$ to which no edges were added in Step~1. If $v$ is not adjacent to an infinigon, then no edges are added in Step~2 either. Thus, $\ka_{V}^{G}(v)=\ka_{V}^{G'}(v')$. Otherwise, $\ka_{V}^{G}(v) <0$ (as, otherwise, edges were added in Step~1). Due to planarity, at most two edges were added to $v$ in Step~2, i.e., $|v|\leq|v'|\leq|v|+2$.
Moreover, for a face $g\in F'$ adjacent to $v'$, there is either a corresponding face $f\in F$ and $|g|=|f|$ or $g$ was created in Step~2 from an infinigon, in which case $|g|\geq\max\{6,(\min_{v\in V}|v|+2)/\eps\}=:1/\delta$ by definition of $R_\eps$.
By these considerations, we get
\begin{align*}
\ka_V^{G'}(v')&\leq 1-\frac{|v|}{2}+\sum_{f\in F,v\in f}\frac{1}{|f|}+|v'|\delta \leq \ka_{V}^{G}(v)+(|v|+2)\de\leq \ka_{V}^{G}(v)+\eps,
\end{align*}
where the last inequality follows by the definition of $\de$.
We have $\ka_{V}^{G'}(v') <0$ as $\ka_{V}^{G}(v) \leq-\eps$  by Proposition~\ref{l:Higuchi} whenever  $\eps\in(0,1/1806)$.
Thirdly, let $w\in V'$ be a vertex that has no corresponding vertex in $V$, i.e., it is a vertex of a binary tree which was added in Step~1. Note that, in this case, $\ka_V(G)=0$  and $|w|\geq 3$ by Step~1 and Step~2. Moreover, by definition of $R_{\eps}$, all faces $g\in F'$ adjacent to $w$ satisfy $|g|\geq 6$. We get $\ka^{G'}_V(w)\leq 1-|w|/2+|w|/6\leq0=\ka_V(G)$.
\end{proof}

\section{Geometric Applications}\label{s:GeoApp}

In this section, we discuss some applications of the fact that every non-positively curved planar graph is locally tessellating. Indeed, by the embedding constructed in the previous section, most of statements for locally tessellating graphs are now direct consequences of the results for tessellations.
These  results concern the absence of cut locus, the boundary structure of distance balls, estimates for the growth of distance balls, bounds and positivity of Cheeger's constant and empty interior of minimal bigons.

\subsection{Absence of cut locus}\label{s:no_cut_locus}

The \emph{cut locus} of a vertex $v_0$ of a graph is the set of all vertices, where the distance function $d(v_0,\cdot)$ attains a local maxima. In contrary, empty cut locus for all vertices implies that geodesics can be continued ad infinitum.

For non-positively curved tessellation, a corresponding result can be found in \cite{BP2}. Note that in \cite{BP1,BP2} the results and proofs are given for the metric space considering the distance function on the faces of the graph. However, the results are true for the metric space of vertices as well, since the dual graph of a non-positively corner curved tessellation is again a non-positively corner curved tessellation. 

\begin{Thm} \label{t:no_cut_locus}
\emph{Let $G =(V,E,F)$ be a planar graph that is connected, locally finite and satisfies $\ka_C(G)\leq 0$. Then, the metric space $(V,d)$ has no cut locus.}
\begin{proof}
By Theorem~\ref{t:nonpos}, the graph $G$ is locally tessellating.
Suppose there is $v_0\in V$ with non-empty cut locus and suppose $v\in V$ is in the cut locus of $v_0$. Then, by the definition of the cut locus, all adjacent vertices of $v$ have smaller or equal distance to $v_0$. Consider a simply connected set of vertices $W$ that contains all vertices of the paths of minimal length from $v_0$ to $v$ and all adjacent vertices of $v$. By (G2), (G3) of Theorem~\ref{t:embedding}, there is a tessellation $G'$ such that the distances of corresponding vertices in $W$ and $W'$ agree.  Let $v_0',v'\in W'$ be the corresponding vertices  to $v_0,v$ in $W$. Then, $v'$ is in the cut locus of $v_0'$ in $G'$. By (G4), we have $\ka_C({G'})\leq 0$.  This leads to a contradiction to \cite[Theorem~1]{BP1} which guarantees absence of cut locus for tessellations under the assumption of non-positive corner curvature.
\end{proof}
\end{Thm}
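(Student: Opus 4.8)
The plan is to reduce the claim to the corresponding statement for tessellations, \cite[Theorem~1]{BP1}, by means of the embedding of Theorem~\ref{t:embedding}. Since $\ka_C(G)\le0$, Theorem~\ref{t:nonpos} shows that $G$ is locally tessellating, so that the hypotheses of Theorem~\ref{t:embedding} are met. I would argue by contradiction: suppose some $v_0\in V$ has non-empty cut locus and let $v$ be a vertex at which $d(v_0,\cdot)$ has a local maximum, that is $d(v_0,w)\le d(v_0,v)$ for every $w\sim v$. Equivalently, no distance-minimizing path from $v_0$ to $v$ admits a prolongation to a minimizing path of length $d(v_0,v)+1$.

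First I would localize the obstruction. I would pick a finite, simply connected $W\subseteq V$ (both $G_W$ and $G_{V\setminus W}$ connected, which we may arrange by enlarging a finite set containing the data below, as $G$ is infinite and planar) such that $W$ contains $v_0$, $v$, all vertices on minimizing $v_0$--$v$ paths, and all neighbours of $v$. Fixing $\eps>0$, Theorem~\ref{t:embedding} then yields a tessellation $G'$ and corresponding sets $W\subseteq W'$. By (G4) we have $\ka_C(G')\le0$, so $G'$ is non-positively corner curved; by (G3) the distances between corresponding vertices of $W$ are unchanged; and by (G2) the induced subgraph on $W$ is carried isomorphically onto that on $W'$. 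In particular a minimizing $v_0$--$v$ path transports to a path of the same length in $G'$, which is thus a minimizing $v_0'$--$v'$ path.

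Now I would invoke \cite[Theorem~1]{BP1}: a tessellation with $\ka_C\le0$ has empty cut locus, so the minimizing path $v_0'\to v'$ extends in $G'$ to a neighbour $u'$ of $v'$ with $d(v_0',u')=d(v_0',v')+1$. If $u'$ corresponds to a vertex of $W$, then (G2) makes its preimage $u$ a neighbour of $v$ and (G3) gives $d(v_0,u)=d(v_0,v)+1$, contradicting the maximality of $v$. This contradiction would finish the proof.

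The hard part is to guarantee that the extending neighbour $u'$ is of this harmless type, i.e.\ to control the neighbours of $v'$ that the embedding creates outside $W'$. The horizontal edges of Step~2 pose no difficulty: since $W$ contains the whole $1$-neighbourhood of $v$ and $R_\eps\ge2\,\mathrm{diam}(W)$, no closing edge is attached to $v'$. The delicate case is Step~1, where a vertex with $\ka_V^G(v)=0$ that is adjacent to an infinigon receives binary trees; then $v'$ acquires a new neighbour, a tree root, at distance $d(v_0',v')+1$, and the extension furnished by \cite[Theorem~1]{BP1} might escape along it without pulling back to $G$. When $\ka_V^G<0$ on $W$ this cannot occur, since the stronger form of (G2) identifies $G_{B_1^G(W)}$ with $G_{B_1^{G'}(W')}$, so that every neighbour of $v'$ already lies in $W'$. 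I therefore expect the main obstacle to be precisely this zero-curvature, infinigon-adjacent case: one must either show that a local maximum of $d(v_0,\cdot)$ cannot sit at such a vertex, or refine the choice of $W$ so that the prolongation in $G'$ is forced onto a vertex with a genuine preimage in $G$.
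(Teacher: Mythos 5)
Your proposal follows exactly the route of the paper's own proof: localize the data in a finite simply connected $W$, embed into a tessellation $G'$ via Theorem~\ref{t:embedding}, use (G2)--(G4) to transfer the local maximum of $d(v_0,\cdot)$ to $G'$, and contradict \cite[Theorem~1]{BP1}. The one step you leave open --- ruling out that the prolongation in $G'$ escapes along a binary-tree root attached to $v'$ in Step~1 --- is a genuine issue, and it is precisely the step the printed proof passes over in silence when it asserts that ``$v'$ is in the cut locus of $v_0'$''. By (G1) and $\ka_C(G)\le 0$, this can only happen when $v$ is an inner vertex of a regular extended edge, so your diagnosis of where the obstacle sits is exactly right; your treatment of the Step~2 closing edges is also correct.

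The missing case can be closed by your first suggested option: a local maximum of $d(v_0,\cdot)$ cannot sit at an inner vertex of a regular extended edge. Such a $v$ has degree $2$ with neighbors $u,w$, and its two adjacent faces are both infinigons of infinite degree, hence unbounded. If $d(v_0,u)\le d(v_0,v)$ and $d(v_0,w)\le d(v_0,v)$, then geodesics from $v_0$ to $u$ and to $w$ cannot pass through $v$ (they would otherwise have length at least $d(v_0,v)+1$), so $u$ and $w$ are joined by a walk avoiding $v$; together with the path $u$--$v$--$w$ this yields a simple cycle through the two edges at $v$. By Jordan's curve theorem one of the two faces meeting at $v$ has its interior entirely inside the bounded component, contradicting that both faces are unbounded. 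With this supplement your argument is complete, and is in fact more careful than the paper's version at exactly the point where care is needed.
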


\subsection{The boundary of distance balls}\label{s:locstructure}

Non-positive corner curvature has very strong implications on the boundary structure of distance balls. In particular, the concept of admissibility introduced in \cite{BP1,BP2} captures important aspects of the boundary behavior. Since this concept is quite involved, we only derive some of its most important consequences.

In \cite{Z}, some of these statements were already proven under various assumptions which all imply $\ka_{C}(G)<0$. In particular, these statements are used there to prove positivity of Cheeger's constant.
Here, we will use these properties to prove absence of finitely supported eigenfunctions of nearest neighbor operators on planar non-positively curved graphs.

\begin{Thm} \label{t:admissiblity} \emph{Let $G $ be a planar graph that is locally finite and satisfies $\ka_C(G)\leq 0$. Let  $v_0\in V$, $n\in\N_0$ and denote $B_n:=B_n(v_0)$ and $S_n:=S_n(v_0)$.
\begin{itemize}
  \item [(1.)] Every vertex in $S_{n}$ is adjacent to at least one vertex in $S_{n+1}$.
  \item [(2.)] Every vertex in $S_{n+1}$ is adjacent to at most two vertices in $S_{n}$.
 \item [(3.)] If two vertices in $S_{n}$ have a common neighbor in $S_{n+1}$, then
     both of them have another neighbor in $S_{n+1}$.
  \item [(4.)] Let $f_1,\ldots,f_{2k}$ be a cyclic enumeration of the faces of $\dd_F B_n$. Then, the case
       $\av{f_{2j-1}\cap  B_n}=\av{f_{2j}\cap V\setminus B_n}=1$ for all $1\leq j\leq k$ can not happen.
  \item[(5.)] The sphere $S_n$ admits a cyclic enumeration in the sense that two succeeding vertices are adjacent to a common boundary face in $\dd_F B_n$.
\end{itemize}}
\begin{proof}
Let $G'$ be the tessellating graph constructed from $B_{n+1}$ in $G$. As the embedding does not change distances of vertices in  $B_{n+1}$, by (G3). the spheres $S_n,S_{n+1}\subset V$ can be considered as subsets of the spheres $S_n':=S_n(v_0')$ and $S_{n+1}':=S_{n+1}(v_0')$ in $G'$.

(1.) Suppose $v\in S_n$ is not adjacent to any vertex in $S_{n+1}$. Then, $v$ is in the cut locus of $v_0$ which is a contradiction to Theorem~\ref{t:no_cut_locus}.

(2.) Suppose $v\in S_{n+1}$ is adjacent to more than two vertices in $S_n$. Then, the corresponding vertex $v'$ of $v$ in $G'$ is connected to more than two vertices in $S_{n+1}'$. This gives a contradiction to \cite[Proposition~2.5 (a)]{BP2}, (where a corresponding statement is found for the dual graph).

(3.) Let $u,v\in S_n$ be adjacent to some  $w\in S_{n+1}$. Denote by $u',v',w'$ the corresponding vertices in $V'$. The subgraph $G_{\{u,v,w\}}$ is a path and is not included in an extended edge (otherwise, this leads to a contradiction to $u,v\in S_n$ since every extended edge is regular). Hence, there is a unique face $f\in\dd_F B_n$ that contains $u,v,w$ and this face is bounded. By the construction of $G'$, the face $f$ has a corresponding boundary face $f'$ in $G'$. By \cite[Corollary~2.7]{BP2}, occurrence of such a face implies that the corresponding vertices $u', v'\in S_{n}'$ have neighbors $u_1', v_1'\in S_{n+1}'$ such that $u_1',v_1'\neq w'$. (In the language of \cite{BP2}, the dual vertex of the face $f$ has \emph{label $b$} and, therefore, its neighbors in the boundary have \emph{label $a^+$} by the \emph{admissibility of distance balls}. Translating this to our situation, we obtain the conclusion above.) Since $\ka_{C}(G)\leq0$ and the subgraph $G_{\{u,v,w\}}$ is not included in an extended edge, no edges were added to the vertices $u,v$ in the construction of $G'$, by (G1). Therefore, there are  $u_1,v_1\neq w$ in $S_{n+1}$ whose corresponding vertices are $u_1',v_1'$.

(4.) Assume the opposite. Then, $B_{n+1}$ encloses no infinigon. By (G1), no edges were added to $B_{n+1}$ while embedding it into a tessellation. The corresponding statement for tessellations, \cite[Proposition~13]{KLPS}, now gives a contradiction.

(5.) We have  $\ka_C(G')\leq 0$, by (G4). Thus,  $S_{n}'$ admits a cyclic enumeration, by \cite[Theorem~{3.2}]{BP1}. Since $S_n$ can be considered as a subset of $S_{n}'$, this gives enumeration of $S_n$. Now, it can be easily seen that this is a cyclic enumeration of $S_n$.
\end{proof}
\end{Thm}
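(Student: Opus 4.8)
The plan is to prove all five statements by transferring them to the tessellating supergraph $G'$ constructed in Theorem~\ref{t:embedding} and then invoking the known results for tessellations. The crucial preliminary observation, which I would establish once and reuse, is that if we build $G'$ from a simply connected set $W$ containing $B_{n+1}$, then by (G3) distances of vertices in $B_{n+1}$ are preserved, so the spheres $S_n$ and $S_{n+1}$ of $G$ sit inside the corresponding spheres $S_n'$ and $S_{n+1}'$ of $G'$. Since $\ka_C(G)\le 0$, Theorem~\ref{t:nonpos} guarantees $G$ is locally tessellating, which is what lets us apply the embedding theorem, and (G4) gives $\ka_C(G')\le 0$ so the tessellation results of \cite{BP1,BP2,KLPS} apply.

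With this setup in place, each item follows by a short argument. For (1.), a vertex $v\in S_n$ with no neighbor in $S_{n+1}$ would be a local maximum of $d(v_0,\cdot)$, hence in the cut locus, contradicting Theorem~\ref{t:no_cut_locus}. For (2.) and (5.) the statements transfer almost verbatim: a vertex in $S_{n+1}$ adjacent to more than two vertices in $S_n$ yields, via the dual-graph formulation, a contradiction to the corresponding proposition of \cite{BP2}, and the cyclic enumeration of $S_n'$ from \cite{BP1} restricts to a cyclic enumeration of $S_n$. For (4.), I would argue that the hypothesized alternating pattern forces $B_{n+1}$ to enclose no infinigon, so by (G1) no edges are added during the embedding; then the tessellation statement \cite[Proposition~13]{KLPS} applies directly to give the contradiction.

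The main obstacle is statement (3.), because transferring it back from $G'$ to $G$ is delicate: the tessellation result produces \emph{extra} neighbors $u_1',v_1'$ in $S_{n+1}'$, and I must rule out that these are artifacts of edges added during the embedding rather than genuine edges of $G$. The key is to verify that no edges are added to $u$ or $v$. First I would check that the subgraph on $\{u,v,w\}$ forms a path that is not contained in an extended edge---if it were, regularity of extended edges would force the adjacent faces to be infinigons, contradicting $u,v\in S_n$. Consequently there is a unique \emph{bounded} face $f\in\dd_F B_n$ containing $u,v,w$, with a corresponding boundary face $f'$ in $G'$. Since $\ka_C(G)\le0$ and $G_{\{u,v,w\}}$ is not in an extended edge, (G1) guarantees no edges were added at $u$ or $v$, so the neighbors $u_1',v_1'$ pull back to genuine neighbors $u_1,v_1\neq w$ in $S_{n+1}$. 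I would invoke \cite[Corollary~2.7]{BP2} for the existence of the extra neighbors in $G'$, translating the admissibility labeling (the dual vertex of $f$ carries label $b$, forcing its boundary neighbors to carry label $a^+$) into the stated conclusion.
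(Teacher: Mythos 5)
Your proposal follows essentially the same route as the paper's own proof: embed $B_{n+1}$ into a tessellation $G'$ via Theorem~\ref{t:embedding}, use (G3) to identify the spheres, and then invoke Theorem~\ref{t:no_cut_locus} for (1.), \cite[Proposition~2.5 (a)]{BP2} for (2.), \cite[Corollary~2.7]{BP2} together with (G1) for (3.), \cite[Proposition~13]{KLPS} for (4.), and \cite[Theorem~3.2]{BP1} for (5.). The handling of the delicate pull-back in (3.) matches the paper's argument exactly, so the proof is correct as written.
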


\subsection{Growth of distance balls}\label{s:growthDistanceBalls}
In this subsection, we give estimates for the exponential growth of distance balls in terms of curvature. A lower bound for tessellations is found in  \cite[Theorem~5.1]{BP1} and an  upper bound in \cite[Theorem~4]{KP}.

\begin{Thm} \label{t:growthDistanceBalls}
\emph{Let $G $ be a simple,  planar graph that is connected, locally finite, has no cut locus and satisfies $\ka_V(G)<~0$. Then, for all $v\in V$ and $n\geq 1$
$$|S_{n}(v)|\geq -2\ka_V(G)\frac{q}{q-1}|B_{n-1}(v)|,$$
where  $q:=\sup_{f\in F}|f|$ and ${q}/\ab{q-1}=1$ in the case $q=\infty$. Moreover, for the exponential growth rate $\mu:=\limsup_{n \to \infty} \frac{1}{n}\log |S_n(v)|$ and  $p:=\sup_{v\in V}|v|$, one has
$$\log\ab{1-2\ka_V(G)\frac{q}{q-1}} \leq\mu\leq \log(p-1).$$}
\begin{Remark} By Lemma~\ref{l:nonpos_cases}.(1.) and Theorem~\ref{t:no_cut_locus}, the assumptions that $G$ is simple and has no cut locus is implied by $\ka_C(G)\leq0$.
\end{Remark}
\begin{proof}[Proof of Theorem~\ref{t:growthDistanceBalls}]
Theorem~\ref{t:nonpos} implies that $G$ is strictly locally tessellating. By the negative vertex curvature, (G2) and (G3) imply that the distance-$n$-ball of a vertex in $G$ is isomorphic to the distance-$n$-ball for the corresponding vertex in a tessellating graph $G'$. Therefore, $|S_n^{G}(v)|=|S_n^{G'}(v')|$ and $|B_n^{G}(v)|=|B_n^{G'}(v')|$.
By (G5),  we have $\ka_V(G')\leq \ka_V(G)+\vareps$ for  $\vareps\in(0,1/1806)$. Combining this with the statement for tessellations, \cite[Theorem~5.1]{BP1}, we obtain
\begin{eqnarray*}
 |S_n^{G}(v)|=|S_{n}^{G'}(v')|&\geq& -2\ka_V(G')\frac{q}{q-1}|B_{n-1}^{G'}(v')|\\
   &\geq& -2(\ka_V(G)+\vareps)\frac{q}{q-1}|B_{n-1}^{G}(v)|
\end{eqnarray*}
As $\vareps$ can be chosen arbitrarily small, we obtain the first result. The lower bound in the second statement is a direct consequence of the first one (compare  \cite[Corollary~5.2]{BP1}). The upper bound follows from a comparison to a $p$-regular tree. For more details see \cite[Theorem~4]{KP}.
\end{proof}
\end{Thm}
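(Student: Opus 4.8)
The plan is to transfer the known growth estimate for tessellations to $G$ through the embedding of Theorem~\ref{t:embedding}, and to absorb the curvature error term by letting the closing parameter tend to zero. First I would observe that, since $\ka_V^G(v)\le\ka_V(G)<0$ for every vertex $v$, Theorem~\ref{t:nonpos} shows that $G$ is (strictly) locally tessellating, so the embedding construction applies. A useful simplification of strict negativity is that Step~1 of that construction becomes vacuous: no vertex satisfies $\ka_V^G(v)=0$, so no binary trees are attached, and only the horizontal closing edges of Step~2 are ever added.

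Fix $v\in V$, an integer $n\ge1$, and $\eps\in(0,1/1806)$. I would pick a finite, simply connected set $W$ containing $B_n(v)$ and form the tessellation $G'=G'_\eps$ of Theorem~\ref{t:embedding}. Since the closing edges are introduced only once faces have grown past the threshold $R_\eps$, which is chosen $\ge2\,\mbox{diam}(W)$, the embedding restricts—via (G2) and (G3)—to a ball-preserving isomorphism on $B_n(v)$. Concretely, this yields
\[
|S_n^{G}(v)|=|S_n^{G'}(v')|\qquad\mbox{and}\qquad |B_{n-1}^{G}(v)|=|B_{n-1}^{G'}(v')|,
\]
while (G5) gives the curvature comparison $\ka_V(G')\le\ka_V(G)+\eps<0$.

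Next I would apply the tessellation bound \cite[Theorem~5.1]{BP1} to $G'$ and chain it with the identities above:
\[
|S_n^{G}(v)|=|S_n^{G'}(v')|\ge -2\ka_V(G')\frac{q'}{q'-1}|B_{n-1}^{G'}(v')|\ge -2\ab{\ka_V(G)+\eps}\frac{q'}{q'-1}|B_{n-1}^{G}(v)|,
\]
where $q':=\sup_{f\in F'}|f|$. The key point is that $q'/(q'-1)\to q/(q-1)$ as $\eps\to0$: when $q<\infty$ every face is already a polygon and no closing occurs, so $q'=q$; when $q=\infty$ the closing produces faces of degree $\ge1/\eps$, so $q'\to\infty$ and hence $q'/(q'-1)\to1=q/(q-1)$. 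Letting $\eps\to0$ therefore yields the first inequality. The lower bound for $\mu$ then follows by iteration, since $|B_n|=|B_{n-1}|+|S_n|\ge\ab{1-2\ka_V(G)\tfrac{q}{q-1}}|B_{n-1}|$ forces geometric growth with ratio $1-2\ka_V(G)\tfrac{q}{q-1}$ (compare \cite[Corollary~5.2]{BP1}). For the upper bound, every vertex $u\in S_n(v)$ lies on a geodesic from $v$ and hence has at most $|u|-1\le p-1$ neighbours in $S_{n+1}(v)$, so $|S_{n+1}(v)|\le(p-1)|S_n(v)|$ and thus $\mu\le\log(p-1)$; this is the comparison with the $p$-regular tree of \cite[Theorem~4]{KP}.

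The main obstacle I anticipate is not any single estimate but the careful bookkeeping around the embedding. One must verify that strict negativity of $\ka_V$ is genuinely used, both to kill Step~1 and to guarantee (through the strengthened form of (G2)) that the sphere and ball combinatorics are preserved \emph{verbatim}; and one must check that the auxiliary face-degree supremum $q'$ converges in the limit $\eps\to0$ so that the constant $q/(q-1)$ appearing in the final bound is the one coming from $G$ rather than from the comparison tessellation. Once these two points are settled, the remaining steps are direct transcriptions of the tessellation results.
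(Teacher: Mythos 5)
Your proposal is correct and follows essentially the same route as the paper: reduce to a strictly locally tessellating graph via Theorem~\ref{t:nonpos}, embed a ball-containing simply connected set into a tessellation $G'_\eps$, apply \cite[Theorem~5.1]{BP1} there, transfer back with (G2), (G3), (G5), and let $\eps\to0$. Your extra bookkeeping on $q'$ versus $q$ addresses a point the paper's proof passes over silently, and is a welcome (correct) refinement rather than a deviation.
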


\subsection{Estimates for the Cheeger constant}\label{s:cheeger}

An isoperimetric constant known as the Cheeger constant plays an important role in many areas of geometry, probability and spectral theory. For infinite graphs, it was first defined by Dodziuk \cite{D} and later in another version by Dodziuk/Kendall \cite{DKe}. These different version appear in connection to different versions of the discrete Laplace operator. In \cite{D} and \cite{DKe}, the respective constant is used to estimate the bottom of the spectrum. In probability, positivity of Cheeger's constant implies that the simple random walk is transient. This and various other implications can be found in \cite{G,W,W2}.

For a subset $U\subseteq V $, let the Cheeger constants be defined as
\begin{eqnarray*}
\alpha_U&:=&\inf\ac{\frac{|\dd_E
W|}{\mbox{vol}(W)}\mid W\subseteq U\; \mathrm{finite}}\qand \al:=\al_V,\\
\beta_U&:=&\inf\ac{\frac{|\dd_E W|}{|W|}\mid W\subseteq U\; \mathrm{finite}}\qand\be:=\be_V,
\end{eqnarray*}
where $\dd_E W$ is the set of edges that connect a vertex in $W$ with a vertex in $V\setminus W$ and $\mbox{vol}(W)=\sum_{v\in W}|v|$. The constant $\al$ was first introduced in \cite{DKe} and $\be$ in \cite{D}. The set $\mathcal K$ of finite subsets of $V$ forms a net with respect to the inclusion relation. We define the following limits along this net
$$\al_{\dd V}:=\lim_{K\in \mathcal K}\al_{V\setminus K}\qand\be_{\dd V}:=\lim_{K\in \mathcal K}\be_{V\setminus K}.$$
In \cite{F} the quantity $\al_{\dd V}$ was introduced as $\al_\infty$.

Woess \cite{W} and $\dot{\mbox{Z}}$uk \cite{Z} proved separately that negative curvature implies a strong isoperimetric inequality, i.e., have positive Cheeger constant. While \cite{W} assumes that the graph is  tessellating and an average curvature is negative, \cite{Z} allows for infinigons and his assumptions imply negative corner or face curvature.
Positivity of Cheeger's constant was also proven later by Higuchi \cite{H} under the stronger assumption of negative vertex curvature in the case of tessellations. Explicit  formulas for the Cheeger constant of regular tessellations can be found  \cite{HJL,HiShi}. In \cite{KP} lower bounds for both types of Cheeger's constant are obtained in the context of locally tessellating graphs in terms of curvature.
Moreover, Fujiwara \cite{F} proved that the Cheeger constant at infinity  $\al_{\dd V}$ is equal to one for trees with vertex degree (and hence curvature) tending to negative infinity. In \cite{K}, it is shown that this implication holds also for  tessellating graphs.

\begin{Thm}
\label{thm:cheegest}
\emph{Let $G$ be a simple, planar graph that is connected, locally finite and satisfies $\ka_{V}(G)\leq0$. \\
\emph{(1.)} For all  $U\subseteq V$, we have
\begin{align*}
\al_U \ge 1-\frac{1}{p_U}\frac{2q_U}{q_U-2} \qquad\mbox{ and }\qquad\be_U \ge  p_U-\frac{2q_U}{q_U-2},
\end{align*}
where  $p_U:=\inf\limits_{v\in U}|v|$, $q_U:=\inf\limits_{f\in F,f\cap U\neq\emptyset}|f|$ and the conventions $1/\infty=0$, $\infty/\infty=1$.\\
\emph{(2.)} If $\lim\limits_{K\in \mathcal K} \inf\limits_{v\in V\setminus K}\ka^G_V(v)=-\infty$, then $\al_{\dd V}=1$ and $\be_{\dd V}=\infty$.\\
\emph{(3.)}  We have
\begin{align*}
\al \ge -2 C  \sup\limits_{v \in V} \frac{1}{|v|}\ka_V^G(v) \quad\mbox{and}\quad \be \ge -2 C \ka_V({G}),
\end{align*}
where $C:=({1+\frac{2}{Q-2}})({1+\frac{2}{(P-2)(Q-2)-2}}),$ with $P:=\sup_{v\in V}|v|$, $Q:=\sup_{f\in F}|f|$ and  the conventions $1/\infty=1/(0\cdot\infty-2)=0$.\\
\emph{(4.)}  $\be\geq\al>0$, whenever $\ka_V^G<0$ on $V$.}
\end{Thm}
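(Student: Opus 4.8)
The overall plan is to first use Theorem~\ref{t:nonpos} to reduce to the tessellating world: since $G$ is simple and $\ka_V(G)\le 0$, condition (b.) of Theorem~\ref{t:nonpos} shows $G$ is locally tessellating, so every bounded face of a finite induced subgraph $G_W$ is a genuine polygon and face degrees are controlled. Because $|\dd_E W|$, $|W|$ and $\mbox{vol}(W)$ are all additive over the connected components of $W$, the infima defining $\al_U$ and $\be_U$ are attained along connected sets, so it suffices to prove each estimate for finite \emph{connected} $W\subseteq U$. For such $W$ I would start from the identity $|\dd_E W|=\mbox{vol}(W)-2|E_W|$, which holds because $G$ is simple and hence every edge has degree two.

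For (1) I would bound $|E_W|$ from above by Euler's formula $|W|-|E_W|+|F_W|=2$. Every bounded face of $G_W$ has degree at least $q_U$ (this is exactly where local tessellation and $W\subseteq U$ enter: a bounded face of $G_W$ is either a single $G$-face meeting $U$, or encloses a hole and is even larger), so the incidence count $2|E_W|\ge q_U(|F_W|-1)$ combined with Euler's formula yields $|E_W|<\frac{q_U}{q_U-2}|W|$. Substituting into the identity gives $\frac{|\dd_E W|}{|W|}\ge p_U-\frac{2|E_W|}{|W|}>p_U-\frac{2q_U}{q_U-2}$, and dividing instead by $\mbox{vol}(W)\ge p_U|W|$ gives $\frac{|\dd_E W|}{\mbox{vol}(W)}>1-\frac{1}{p_U}\frac{2q_U}{q_U-2}$; taking infima over $W$ proves both bounds in (1).

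For the refined bound (3) I would instead apply the Gauss-Bonnet formula to $G_W$, which gives $\sum_{v\in W}\ka_V^{G_W}(v)=2$, and compare the intrinsic curvature $\ka_V^{G_W}$ with the ambient curvature $\ka_V^{G}$. The difference $\ka_V^{G}-\ka_V^{G_W}$ vanishes on interior vertices and is supported near the boundary, so that $-\sum_{v\in W}\ka_V^{G}(v)=-2+\sum_{v\in W}\big(\ka_V^{G_W}-\ka_V^{G}\big)(v)$ can be bounded above by a constant multiple of $|\dd_E W|$; together with $\ka_V^{G}(v)\le\ka_V(G)$ this produces $\be\ge -2C\ka_V(G)$, and weighting each vertex by $1/|v|$ before comparing with $\mbox{vol}(W)$ produces the corresponding $\al$ bound. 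The \textbf{main obstacle} is precisely this boundary bookkeeping: one must control how much of each boundary face's degree lies inside $W$ in order to extract the explicit constant $C=(1+\tfrac{2}{Q-2})(1+\tfrac{2}{(P-2)(Q-2)-2})$, and I expect the natural two-factor structure of $C$ to emerge from a two-step estimate (one factor from faces, one from the vertex-degree interplay). Since $G$ is locally tessellating, this is exactly the content of the corresponding estimate for locally tessellating graphs, so I would either reproduce that boundary estimate directly or invoke \cite{KP}.

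Parts (2) and (4) then follow as corollaries. For (2), the lower bound $\ka_V^G(v)\ge 1-|v|/2$ shows that $\inf_{v\in V\setminus K}\ka_V^G(v)\to-\infty$ forces $p_{V\setminus K}=\inf_{v\in V\setminus K}|v|\to\infty$; since $\frac{2q}{q-2}\le 6$ for all admissible $q$, part (1) gives $\al_{V\setminus K}\ge 1-6/p_{V\setminus K}\to 1$ and $\be_{V\setminus K}\ge p_{V\setminus K}-6\to\infty$, while testing on a single vertex gives $\al_{V\setminus K}\le 1$, so $\al_{\dd V}=1$ and $\be_{\dd V}=\infty$. For (4), $\be\ge\al$ is immediate from $\mbox{vol}(W)\ge|W|$. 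To obtain $\al>0$ I would show $\sup_{v}\frac{1}{|v|}\ka_V^G(v)<0$ by splitting into two regimes: for $|v|\ge 7$ one has $\frac{1}{|v|}\ka_V^G(v)\le\frac{1}{|v|}-\frac16\le-\frac{1}{42}$, whereas for $3\le|v|\le 6$ Proposition~\ref{l:Higuchi} gives $\ka_V^G(v)\le-\frac{1}{1806}$ and hence $\frac{1}{|v|}\ka_V^G(v)\le-\frac{1}{10836}$; the supremum is therefore strictly negative, and part (3) yields $\al>0$. This split is needed precisely because the vertex degrees may be unbounded, so a bound of the form $\al\ge\be/P$ is unavailable.
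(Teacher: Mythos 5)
Your overall route coincides with the paper's: reduce to the locally tessellating case via Theorem~\ref{t:nonpos}, prove (1.) by the identity $|\dd_E W|=\mathrm{vol}(W)-2|E_W|$ together with Euler's formula and a face-degree count, deduce (2.) from (1.) via $\ka_V^G(v)\ge 1-|v|/2$, quote \cite{KP} for (3.), and obtain (4.) from (3.) after checking that $\sup_v\ka_V^G(v)/|v|<0$ (your two-regime split $|v|>6$ versus $3\le|v|\le 6$ is exactly the paper's verification). Parts (2.)--(4.) are fine.

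The genuine gap is in your counting step for (1.), namely the parenthetical claim that every bounded face of $G_W$ has degree at least $q_U$ because a face that ``encloses a hole'' is ``even larger''. Your reduction to connected $W$ does not touch this issue: a connected $W\subseteq U$ can still enclose arbitrarily many bounded components of the complement, and the vertices and faces of $G$ sitting inside such a hole need not meet $U$, so $q_U$ gives no direct control on them. The inequality $2|E_W|\ge q_U(|F_W|-1)$ is therefore not an ``incidence count''; without an argument it could fail, and with many holes the resulting bound $|E_W|<\frac{q_U}{q_U-2}|W|$ would fail outright (the correct Euler count gives $|E_W|\le\frac{q_U}{q_U-2}(|W|+h-1)$ with $h$ the number of holes). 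The paper handles this by keeping the error term $|W|+c(W)-2$, where $c(W)$ counts the complementary components, and then invoking a separate reduction (the argument of Proposition~6 in \cite{K}) to sets with $c(W)\le 2$, for which the error term is at most $|W|$. Your claim that a hole has boundary length at least $q_U$ is in fact true, but it is not a consequence of ``local tessellation and $W\subseteq U$'' alone: one needs that the faces of $G$ adjacent to the bounding cycle from inside contain vertices of $W\subseteq U$ (hence have degree $\ge q_U$) \emph{and} that the vertices strictly inside the hole have non-positive curvature, after which a Gauss--Bonnet estimate on the enclosed disc forces the cycle length to be at least $q_U$; without the curvature hypothesis the claim is false. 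So either supply that Gauss--Bonnet argument or follow the paper and dispose of the holes by the $c(W)\le 2$ reduction. As a minor further point, when invoking \cite{KP} for (3.) one should note, as the paper does, that the cited theorem assumes the right-hand sides are positive (harmless, since otherwise (3.) is vacuous).
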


\begin{Remark}
(a.) Let $\gm:=(q-2)/2q$ be the normalized angle of a regular $q$-gon and $\ov\ka:=1-p\gm$. We can reformulate the estimates of (1.) for $U=V$ in terms of curvature as follows
\begin{align*}
    \al\geq \frac{-\ov\ka}{1-\ov\ka}\quad \mbox{and}\quad \be\geq-\frac{\ov\ka}{\gm}.
\end{align*}
While these estimates are new, (2.) is an extension of \cite{F,K}, (3.) is an extension of \cite{KP} and (4.) is a version of \cite{H,W,Z}. Note also that the first inequality of (4.) is independent of the assumption $\ka_V^G<0$.

(b.) For the proof of (1.) and (2.), we do not need the embedding of Section~\ref{s:LocTessVsTess}. This is important as $p_U$ and $q_U$ might be changed by Step~1 and Step~2.
\end{Remark}

\begin{proof}[Proof of Theorem~\ref{thm:cheegest}]
By Theorem~\ref{t:nonpos}, the graph $G$ is strictly locally tessellating.\\
(1.) We claim that
$$|\dd_E W|\geq \mbox{vol}(W)-\frac{2q_U}{q_U-2}(|W|+c(W)-2),$$
where $c(W)$ is the number of connected components of $G_{U\setminus W}$. For the proof of the formula, we follow the lines of the proof of \cite[Lemma~1]{K}, only instead of using the estimate $|f|\geq 3$, we go with $|f|\geq  q_U$  for all faces $f\in F$ with $|f\cap U |\neq\emptyset$. (Compare also to Proposition~2.2 and 2.3 in \cite{KP}.)
We obtain for all finite and connected sets $W\subseteq U$ with $c(W)\leq 2$
\begin{align*}
  \frac{|\dd_E W|}{\mbox{vol}(W)} &\geq 1-\frac{2q_U}{q_U-2}\frac{|W|}{\mbox{vol}(W)} \geq   1-\frac{1}{p_U}\frac{2q_U}{q_U-2},\\
  \frac{|\dd_E W|}{|W|} &\geq \frac{\mbox{vol}(W)}{|W|}-\frac{2q_U}{q_U-2}\geq {p_U}-\frac{2q_U}{q_U-2}.
\end{align*}

By the same arguments as in the proof of Proposition~6 in \cite{K}, it suffices to consider finite, connected sets $W\subseteq U$ with $c(W)\leq 2$. Thus, the formulas above yield (1.).

(2.) By the formula for the curvature of Lemma~\ref{l:non-pos_subtess_DF}, we have $\inf_{v\in V\setminus K}\ka^G_V(v)\to-\infty$ if and only if $p_{V\setminus K}\to\infty$ along the net $K\in\mathcal K$. Hence, (2.)  follows from (1.).

(3.) Theorem~1 of \cite{KP} states (3.) for locally tessellating $G$ under the assumptions that the right hand sides are positive. By Proposition~\ref{l:Higuchi}, $\ka_V(G)<0$ as $\ka_V^G<0$ on $V$. Moreover, one checks that $\ka^G_V(v)/|v|\leq 1/|v|-1/2+1/3\leq -1/42$ for $|v|> 6$ since $|f|\geq3$ for all $f\in F$. Therefore, also $\inf_{v\in V}\ka^G_V(v)/|v|<0$ if $\ka_V^G<0$ on $V$

(4.) The statement follows directly from (3.), as the right hand side is positive by the considerations in the proof of (3.).
\end{proof}

\subsection{Empty interior of minimal bigons}\label{s:bigons}

In this section, we discuss a geometric property that is related to hyperbolicity. In \cite{P}, it is shown for Cayley graphs of discrete groups that empty interior of minimal bigons is equivalent to Gromov hyperbolicity. Since we allow for arbitrary large faces, this equivalence is not true in our context. However, in \cite[Corollary~1]{Z}, Gromov hyperbolicity is shown by proving empty interior of minimal bigons under various assumptions implying negative corner curvature and the assumption of a uniform bound on the degree of polygons, see also \cite[Theorem~2]{BP2}.
Despite of that, if only all minimal bigons have empty interior, then one still can construct the Floyd-boundary  of $G$ and show that it is homeomorphic to $\Sp^1$. For a detailed discussion and references see \cite{Ka}.

Let us introduce the notion of a minimal bigon. Let $p_1=(v_1,\ldots,v_n)$ and $p_2=(w_1,\ldots,w_n)$ be the vertices of two finite paths satisfying $d(v_1,v_n)=d(w_1,w_n)=n$ and
$v_1=w_1$, $v_n=w_n$. Such a pair $(p_1,p_2)$ is called a \emph{bigon}. A bigon is called \emph{minimal} if $v_j\neq w_j$ for $j\neq1,n$. The \emph{interior of a minimal bigon} are all vertices enclosed by the two paths that do not belong to any of them.

\begin{Thm} \label{t:bigons}
\emph{Let $G $ be a  planar graph that is connected, locally finite and satisfies $\ka_C(G)<~0$. Then, any minimal bigon has empty interior. Moreover, if there is uniform upper bound on the face degree of the polygons, then the graph is Gromov hyperbolic.}
\begin{Remark}Note that the assumption for the Gromov hyperbolicity  only excludes the existence of arbitrary large polygons but not the existence of infinigons.
\end{Remark}
\begin{proof} Since $\ka_C(G)<0$, the graph is strictly locally tessellating by Theorem \ref{t:nonpos}. Let $W$ be the union of the vertices of a bigon and its interior. Obviously, $W$ is simply connected.  Moreover, note that $\ka_C(G)<0$ implies $\ka_V(G)<0$. By (G2), (G3), (G4) there is a tessellation $G'$ with $\ka_C(G')<0$ such that the distance of vertices in $W$ remain unchanged compared to the corresponding set $W'$ in $G'$. Thus, $W'$ is a minimal bigon as well. By \cite[Theorem~2]{BP2}, any minimal bigon in $G'$ has empty interior and the statement follows from (G2). The statement about the Gromov hyperbolicity now follows from the arguments of \cite{P}.
\end{proof}
\end{Thm}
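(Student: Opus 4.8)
The plan is to reduce the statement for the general planar graph $G$ to the corresponding known result for tessellations (namely \cite[Theorem~2]{BP2}) via the embedding constructed in Section~\ref{s:LocTessVsTess}. First I would record that $\ka_C(G)<0$ lets me invoke Theorem~\ref{t:nonpos} to conclude $G$ is strictly locally tessellating; in particular $G$ is simple and has no terminal vertices, so the hypotheses of Theorem~\ref{t:embedding} are available. I would also note the elementary implication $\ka_C(G)<0 \Rightarrow \ka_V(G)<0$, which follows by summing the strictly negative corner curvatures over the corners at each vertex (with multiplicity), since by Lemma~\ref{l:non-pos_subtess_DF} each corner has multiplicity one. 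This strict negativity is what unlocks the stronger conclusions of the embedding theorem, in particular the $B_1$-isomorphism in (G2).

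Next I would fix an arbitrary minimal bigon $(p_1,p_2)$ and take $W$ to be the union of the vertex sets of $p_1$ and $p_2$ together with the interior vertices enclosed by the two paths. The key geometric observation is that this $W$ is \emph{simply connected} in the sense required by Theorem~\ref{t:embedding}: both $G_W$ and $G_{V\setminus W}$ are connected, the former because the two bounding paths together with the enclosed interior form a topological disc, and the latter because the bigon bounds a disc in the plane so its complement stays connected. I would then apply Theorem~\ref{t:embedding} to this $W$ (choosing any $\eps\in(0,1/1806)$) to obtain a tessellation $G'$ into which $G_W$ embeds. By (G3) the distances between corresponding vertices of $W$ are preserved, and by (G2) the adjacency relations within $W$ are preserved, so the images $p_1',p_2'$ of the two paths are again distance-minimizing paths of the same common length with the same shared endpoints and no shared interior vertices; that is, $(p_1',p_2')$ is a minimal bigon in $G'$. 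By (G4) we have $\ka_C(G')\leq 0$, and in fact one wants $\ka_C(G')<0$ so that the strict-curvature tessellation result applies.

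The main obstacle I anticipate is exactly this last point: ensuring the embedded bigon's interior in $G'$ faithfully reflects the interior in $G$, i.e. that no interior vertices are \emph{created} inside the bigon by the embedding construction (Step~1 attaches trees into infinigons and Step~2 closes faces with horizontal edges). The resolution is that Step~1 only adds edges at vertices $v$ with $\ka_V^G(v)=0$, and since $\ka_C(G)<0$ forces $\ka_V^G<0$ everywhere, \emph{no} trees are attached at all; the only modifications are the horizontal closing edges of Step~2, which by construction lie at distance radius beyond $R_\eps$ from $W$ and hence cannot fall inside the bounded region determined by $W$. Thus the interior of $(p_1',p_2')$ in $G'$ corresponds exactly to the interior of $(p_1,p_2)$ in $G$. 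Invoking \cite[Theorem~2]{BP2}, which asserts that minimal bigons in a tessellation with $\ka_C(G')<0$ have empty interior, I would conclude that $(p_1',p_2')$ has empty interior, and therefore by the correspondence that $(p_1,p_2)$ has empty interior in $G$ as well. For the final sentence, under the additional hypothesis of a uniform upper bound on the degrees of polygons, emptiness of minimal bigon interiors together with the thin-triangle/linear-isoperimetric machinery of \cite{P} yields Gromov hyperbolicity; I would simply cite that argument rather than reproduce it.
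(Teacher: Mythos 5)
Your proposal is correct and follows essentially the same route as the paper: reduce to a tessellation via Theorem~\ref{t:embedding}, use (G2)--(G4) to transport the minimal bigon and the strict corner-curvature bound, and invoke \cite[Theorem~2]{BP2} together with \cite{P}. The additional details you supply (why $\ka_C(G)<0$ forces $\ka_V(G)<0$, why $W$ is simply connected, and why Steps~1 and~2 of the construction cannot create vertices inside the bigon) are accurate elaborations of points the paper leaves implicit.
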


\section{Applications in Spectral Theory}\label{s:SpApp}

We start this section by introducing two well known versions of the discrete Laplace operator. Then, we recall the corresponding bounds for the bottom of the spectrum implied by the Cheeger constant. After that, we show that the essential spectrum for both versions of the Laplacian is trivial, whenever the curvature decreases uniformly to $-\infty$. This is an extension of \cite{F} and \cite{K}. Finally, extending \cite{KLPS}, we prove that nearest neighbor operators on  non-positively corner curved graphs have no finitely supported eigenfunctions.

Let $c(V)$ be the space of complex valued functions on $V$ and $c_c(V)$ the space of functions that are zero outside a finite set. The discrete Laplace operator $\Lp$, often used in mathematical physics, is acting as
$$(\Lp\ph)(v)=\sum_{w\sim v}(\ph(v)-\ph(w))$$
and is essentially self-adjoint  on $c_c(V)$, (for a proof see \cite{Woj}). We denote the self-adjoint extension on $\ell^2(V)=\{\ph\in c(V)\mid\sum_{v\in V}|\ph(v)|^2<\infty\}$  also by $\Lp$.
Another version of the Laplacian $\LF$, often used in discrete spectral geometry, acting as
$$(\LF\ph)(v)=\frac{1}{|v|}\sum_{w\sim v}(\ph(v)-\ph(w))$$
on $\ell^2(V,\av{\cdot})=\{\ph\in c(V)\mid\sum_{v}|v||\ph(v)|^2<\infty\}$ is a bounded, self-adjoint operator.

\subsection{Spectral bounds and triviality of essential spectrum}\label{s:specbounds}
The following bound for the bottom of the spectrum of $\LF$ can be derived from \cite{M}, see also \cite{BMS-T,F}
$$1-\sqrt{1-\al^2}\leq\inf\si(\LF).$$
Here, $\al$ is the Cheeger constant defined in Section~\ref{s:cheeger}. This extends  to a bound for the bottom of the spectrum of  the Laplacian $\Lp$ (see \cite{K}, compare also \cite{Woj})
$$(1-\sqrt{1-\al^2})\inf_{v\in V}|v|\leq\inf\si(\Lp).$$
Hence, the bounds  on $\al$  in Theorem~\ref{thm:cheegest} give bounds for the bottom of  the spectrum. The next theorem generalizes a result in \cite{K}, see also \cite{F}.

\begin{Thm}\label{t:rap}
\emph{Let $G $ be a simple, planar graph that is connected, locally finite and satisfies $\ka_V(G)\leq0$. Then
\begin{itemize}
  \item [(1.)] $\se(\LF)=\{1\}$ follows if $\ka^{G}_V(v_n)\to-\infty$ for $v_n\to\infty$,
  \item [(2.)] $\se(\Lp)=\emptyset$  if and only if $\ka_V^{G}(v_n)\to-\infty$ for $v_n\to\infty$,
\end{itemize}
where the limit $v_n\to\infty$ means that the sequence eventually leaves  every finite set.}
\begin{proof} Under the assumptions above,
Theorem~\ref{thm:cheegest}.(2.) yields $\al_{\dd V}= 1$. This implies $\se(\LF)=\{1\}$ by \cite[Theorem~1]{F}.
The equivalence in (2.) follows from \cite[Theorem~2]{K} and the fact that the vertex degree tends to $\infty$ if and only if the curvature tends to $-\infty$.
\end{proof}
\end{Thm}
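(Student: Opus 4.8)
The plan is to reduce both statements to known spectral-theoretic results by exploiting the tight link between the two versions of the Cheeger constant and the spectra of the two Laplacians, together with the dictionary between curvature and vertex degree supplied by the formula in Lemma~\ref{l:non-pos_subtess_DF}. Since Theorem~\ref{t:nonpos} guarantees that $G$ is strictly locally tessellating under the hypothesis $\ka_V(G)\le 0$, we may freely use the curvature formula $\ka^G_V(v)=1-|v|/2+\sum_{f\ni v}1/|f|$. The crucial observation, which I would isolate first, is that because every face satisfies $|f|\ge 3$, the sum $\sum_{f\ni v}1/|f|$ is bounded above by $|v|/3$; hence $\ka^G_V(v)\le 1-|v|/6$, so $|v|\to\infty$ forces $\ka^G_V(v)\to-\infty$, and conversely $\ka^G_V(v)\ge 1-|v|/2$ gives that $\ka^G_V(v)\to-\infty$ forces $|v|\to\infty$. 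This equivalence of the two divergences is the combinatorial heart of the argument and will be invoked in both parts.

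For part (1.), I would argue as follows. The hypothesis $\ka^G_V(v_n)\to-\infty$ as $v_n\to\infty$ means precisely that $\inf_{v\in V\setminus K}\ka^G_V(v)\to-\infty$ along the net of finite sets $K\in\mathcal K$. This is exactly the hypothesis of Theorem~\ref{thm:cheegest}.(2.), which yields $\al_{\dd V}=1$. I would then cite the tree-case spectral result of Fujiwara, namely \cite[Theorem~1]{F}, in the form it is quoted in the excerpt: the Cheeger constant at infinity equalling one forces $\se(\LF)=\{1\}$. The only subtlety worth flagging is that Fujiwara's theorem must be applicable in the present generality of locally finite graphs rather than merely trees; I would either invoke it directly if \cite{F} is stated at that level or note that the relevant implication $\al_{\dd V}=1\Rightarrow\se(\LF)=\{1\}$ is a purely operator-theoretic consequence of the Cheeger-type bound $1-\sqrt{1-\al^2}\le\inf\si(\LF)$ applied on the net $V\setminus K$.

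For part (2.), the equivalence $\se(\Lp)=\emptyset\iff\ka^G_V(v_n)\to-\infty$, I would translate the curvature condition into a condition on vertex degrees using the equivalence established in the first paragraph: $\ka^G_V(v_n)\to-\infty$ if and only if $|v_n|\to\infty$ as $v_n\to\infty$. The statement then reduces to the assertion that $\se(\Lp)=\emptyset$ if and only if the vertex degree leaves every finite set, which is precisely the content of \cite[Theorem~2]{K} transported from tessellations to our strictly locally tessellating setting. Since $\Lp$ is defined purely in terms of the combinatorial adjacency structure and $G$ is locally tessellating, the cited result applies verbatim. I expect the main obstacle to be purely expository rather than mathematical: one must be careful that the emptiness of $\se(\Lp)$ is genuinely governed by the local geometry (vertex degrees growing without bound producing a discrete spectrum), and that the ``if and only if'' direction relies on the degree divergence being both necessary and sufficient, so the equivalence of the two divergences must be applied in both directions. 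Assembling these citations with the degree–curvature equivalence completes the proof.
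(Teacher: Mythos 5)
Your proposal is correct and follows essentially the same route as the paper: part (1.) via Theorem~\ref{thm:cheegest}.(2.) giving $\al_{\dd V}=1$ and then \cite[Theorem~1]{F}, and part (2.) via \cite[Theorem~2]{K} combined with the equivalence between divergence of the vertex degree and divergence of the curvature to $-\infty$. The only difference is that you spell out that equivalence explicitly through the bounds $1-|v|/2\le\ka_V^G(v)\le 1-|v|/6$ (justified by Lemma~\ref{l:non-pos_subtess_DF} and $|f|\ge 3$), which the paper leaves implicit.
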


\subsection{Absence of finitely supported eigenfunctions}\label{s:no_cpt_supp_ef}
A linear operator $A$ defined on a subspace of $c(V)$ is called a \emph{nearest neighbor operator} on $G$ if its matrix representation in the standard basis is given by some $a:V\times V\to\C$ such that $a(w,v)\neq0$ if $v\sim w$ and $a(w,v)=0$ if  $v\not\sim w$ and $v\neq w$. Hence, $A$ acts  as
$$(A\ph)(v)=\sum_{w\in V} a(v,w)\ph(w)=a(v,v)\ph(v)+\sum_{v\sim w} a(v,w)\ph(w).$$
The operators $\Lp$ and $\LF$ (possibly plus multiplication by a potential) are nearest neighbor operators. The following theorem is proven in \cite{KLPS} for tessellating graphs.

\begin{Thm} \label{t:no_cpt_supp_ef}
\emph{Let $G $ be a planar graph that is connected, locally finite and satisfies $\ka_C(G)\leq 0$. Then, a nearest neighbor operator on $G$ does not admit finitely supported eigenfunctions.}
\end{Thm}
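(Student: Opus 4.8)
The plan is to reduce the statement, as with the other geometric applications in this paper, to the corresponding result for tessellations proven in \cite{KLPS}, using the embedding of Theorem~\ref{t:embedding}. Suppose toward a contradiction that a nearest neighbor operator $A$ on $G$ admits a finitely supported eigenfunction $\ph\in c_c(V)$, say $A\ph=\lm\ph$, with $\ph\not\equiv0$. First I would localize: let $\operatorname{supp}\ph$ be the finite support of $\ph$ and pick a vertex $v_0$ together with a radius $n$ so large that the ball $B_n(v_0)$ contains $\operatorname{supp}\ph$ together with all its neighbors in its interior. The point of taking a large enough ball is that the eigenvalue equation $(A\ph)(v)=\lm\ph(v)$ at any vertex $v$ only involves the values of $\ph$ at $v$ and its neighbors, so the equation is a purely local condition on a finite configuration.

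The next step is to transport this configuration into a tessellation. By Theorem~\ref{t:nonpos}, $\ka_C(G)\leq0$ forces $G$ to be (strictly) locally tessellating, simple, without terminal vertices and with all extended edges regular, so the hypotheses of Theorem~\ref{t:embedding} are met. I would take a finite simply connected set $W$ containing $B_n(v_0)$ and build the tessellation $G'=G'_\eps$. The crucial property is that by (G1)--(G2) the embedding is a graph isomorphism on the relevant inner region: no edges are added to vertices in the interior where the eigenvalue equation is active, so the adjacency relations of the support of $\ph$ and its neighbors are preserved. Hence I can push $\ph$ forward to a function $\ph'$ on $V'$ supported on $W'$, defining $\ph'(v')=\ph(v)$ for corresponding vertices and $\ph'=0$ elsewhere, and define a nearest neighbor operator $A'$ on $G'$ by copying the coefficients $a(v,w)$ on the corresponding edges of $G'$ (and assigning arbitrary nonzero coefficients to the newly created edges, and matching diagonal entries). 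Because the local neighborhoods of the active vertices coincide in $G$ and $G'$, the eigenvalue equation $A'\ph'=\lm\ph'$ holds at every vertex of $G'$: at interior vertices it is inherited verbatim from $G$, and at the vertices outside $\operatorname{supp}\ph$ both sides vanish.

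Now $\ph'$ is a finitely supported eigenfunction of the nearest neighbor operator $A'$ on the tessellation $G'$, and by (G4) we still have $\ka_C(G')\leq0$. This contradicts the result of \cite{KLPS}, which asserts exactly the absence of finitely supported eigenfunctions for nearest neighbor operators on non-positively corner curved tessellating graphs. The contradiction shows that no such $\ph$ exists, which is the assertion of the theorem.

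The main obstacle I expect is the bookkeeping at the boundary of the transported configuration, namely making sure that the eigenvalue equation is genuinely satisfied at \emph{every} vertex of $G'$ after the edges of Step~1 and Step~2 are added. The equation at a vertex $w'$ where new edges meet could in principle fail, since $A'$ must act with nonzero coefficients along those new edges. This is why the localization must be generous: by choosing $W$ so that $\operatorname{supp}\ph$ together with its full one-step neighborhood lies strictly inside the isomorphic core guaranteed by (G1)--(G2), every vertex at which $\ph'$ or one of its neighbors is nonzero has exactly the same incident edges in $G'$ as in $G$, so the newly added edges only ever connect vertices where $\ph'$ already vanishes, and the equation there reduces to $0=0$. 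Verifying that the isomorphic region of Theorem~\ref{t:embedding} can always be taken large enough to contain this neighborhood is the technical heart of the argument; everything else is a direct appeal to \cite{KLPS}.
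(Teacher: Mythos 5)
Your reduction to \cite{KLPS} via the embedding of Theorem~\ref{t:embedding} has a genuine gap exactly at the point you identify as the ``technical heart,'' and it cannot be closed by enlarging $W$. By (G1), under $\ka_C(G)\le 0$ new edges are attached to a vertex $v$ \emph{if and only if} $v$ is an inner vertex of an extended edge (a degree-two vertex between two infinigons); this is an intrinsic property of $v$ in $G$, completely independent of the choice of the simply connected set $W$. Step~1 of the construction attaches binary trees to \emph{all} such vertices, including those inside $W$, so there is no ``isomorphic core'' that you can grow to swallow $\operatorname{supp}\ph$ together with its neighborhood: (G2) only preserves adjacency \emph{within} $W$ (resp.\ $B_1^G(W)$ when $\ka_V^G<0$ there), it does not prevent new edges from being hung on vertices of $W$. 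Now suppose $\ph(v)\neq 0$ for such a vertex $v$, and let $r'$ be the root of a tree attached to $v'$ in $G'$. Every nearest neighbor operator $A'$ on $G'$ must satisfy $a'(r',v')\neq 0$, so $(A'\ph')(r')=a'(r',v')\ph(v)\neq 0$ while $\lm\ph'(r')=0$: the eigenvalue equation fails at $r'$. (The equation at $v'$ itself is fine, since the new neighbors carry $\ph'=0$; the failure is one step out.) Thus your argument only goes through once you have \emph{separately} shown that $\ph$ vanishes on all inner vertices of extended edges, and that is a nontrivial piece of the unique continuation statement you are trying to prove; such configurations really occur under $\ka_C(G)\le 0$ (e.g.\ a $3$-regular tree with one edge subdivided). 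A similar, more easily handled bookkeeping issue arises with the horizontal edges of Step~2, which join pairs of existing vertices of $G$.

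For comparison, the paper does not argue this theorem through the embedding at all (and explicitly remarks that the induction of \cite{KLPS} over face-distance balls breaks down because those balls can be infinite when infinigons are present). Instead it writes $A$ in polar coordinates around $v_0$ as a block tridiagonal operator with off-diagonal blocks $E_n$ encoding the edges between $S_n$ and $S_{n+1}$, and uses the boundary structure of distance balls (Theorem~\ref{t:admissiblity}, itself obtained via the embedding) to show each $E_n$ is injective (Lemmas~\ref{l:E_n} and~\ref{l:E_n_injective}); then $E_{k-1}\ph_{k-1}=0$ on the outermost nonzero sphere forces $\ph_{k-1}=0$, a contradiction. If you want to salvage your route, you would need either a preliminary lemma killing $\ph$ on inner vertices of extended edges, or a modified embedding that adds no edges at vertices of $\operatorname{supp}\ph$ while still producing a non-positively corner curved tessellation; neither is supplied by Theorem~\ref{t:embedding} as stated.
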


The proof in \cite{KLPS} is based on an induction over the distance balls of the metric space of faces. Since we allow for unbounded faces, these distance balls might have infinite cardinality. Therefore, the proof can not be carried over directly. We will give an alternative proof which uses a representation of the operator in polar coordinates and makes use of what we know about the boundary of distance balls, Theorem~\ref{t:admissiblity}.

Let $G=(V,E,F)$ be a planar graph that is locally finite and satisfies $\ka_C(G)\leq0$. For $v_0\in V$, denote $S_n=S_n(v_0)$ and  $s_n=|S_{n}|$. By Theorem~\ref{t:admissiblity}.(5.), we have a cyclic enumeration of $S_n$.
We reorder the enumeration in the spheres inductively by cyclic permutation. Let $v_1^{(n)}$ be the first vertex in the enumeration of $S_n$. We shift the enumeration of $S_{n+1}$  such that in the new enumeration $v_1^{(n+1)}$ is the first vertex (with respect to the unshifted enumeration of $S_{n+1}$) that is adjacent to $v_{1}^{(n)}$. Inductively, we get an enumeration, $v_1^{(n)},\ldots, v_{s_n}^{(n)}$ for all spheres $S_n$.

For a function $\ph\in c(V)$, let
$\ph_n$ be the restriction to $c(S_n)$.
For a nearest neighbor operator $A$,  let the matrices $E_n\in \C^{s_{n+1}\times s_{n}}$,
$D_n\in\C^{s_{n}\times s_{n}}$, $E_n'\in\C^{s_{n}\times s_{n+1}}$ be given such that
$$(A\ph)_n=-E_{n-1}\ph_{n-1}+D_n\ph_n-E_n'\ph_{n+1},$$
Then, $D_n$ is  the restriction of $A$ to $c(S_n)$ and the matrices $E_n$ and $E_n'$ are given by $E_n(i,j)=a(v_i^{(n+1)},v_j^{(n)})$ and $E_n'(j,i)=a(v_j^{(n)},v_i^{(n+1)})$ for $i=1,\ldots,s_{n+1}$, $j=1,\ldots,s_{n}$. A similar construction was given in \cite{FHS}.

\begin{Lemma} \label{l:E_n}
\emph{Let $G $ be a  planar graph that is connected, locally finite and satisfies $\ka_C(G)\leq 0$. Then, for $n\in \N_0$, we have the following:
\begin{itemize}
\item [(1.)] Each column of $E_n$ has at least one non-zero entry.
\item [(2.)] Each row of $E_n$ has exactly one or two non zero entries. Two non-zero entries always correspond to succeeding vertices in the enumeration of $S_n$.
\item [(3.)] Each two columns of $E_n$ have at most one non-zero entry at the same component. In this case, each of the columns have another non-zero entry at a distinct component.
\end{itemize}}
\begin{proof}
Statement (1.) follows since each vertex in $S_{n}$ is connected to a vertex in $S_{n+1}$ by  Theorem~\ref{t:admissiblity}.(1.). Statement (2.) follows since each vertex in $S_{n+1}$ is connected to at most two vertices in $S_{n}$, by Theorem~\ref{t:admissiblity}.(2.). The other statement of (2.) follows from the enumeration of the distances spheres. Statement (3.) follows from the planarity and Theorem~\ref{t:admissiblity}.(3.) \end{proof}
\end{Lemma}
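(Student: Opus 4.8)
The plan is to observe that the entire lemma is a statement about the bipartite adjacency pattern between the consecutive spheres $S_n$ and $S_{n+1}$, and then to read off each assertion from the boundary structure established in Theorem~\ref{t:admissiblity}. Indeed, since $A$ is a nearest neighbor operator we have $E_n(i,j)=a(v_i^{(n+1)},v_j^{(n)})\neq0$ precisely when $v_i^{(n+1)}\sim v_j^{(n)}$. Thus the nonzero components of the $j$-th column of $E_n$ index the neighbors of $v_j^{(n)}$ lying in $S_{n+1}$, while the nonzero entries of the $i$-th row index the neighbors of $v_i^{(n+1)}$ lying in $S_n$. By Theorem~\ref{t:nonpos} the hypothesis $\ka_C(G)\le0$ makes $G$ locally tessellating, so Theorem~\ref{t:admissiblity} applies to every ball $B_n=B_n(v_0)$, and the whole proof reduces to translating adjacency facts between successive spheres.

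For (1.) I would note that a column of $E_n$ is nonzero exactly when the corresponding vertex of $S_n$ has a neighbor in $S_{n+1}$, which is Theorem~\ref{t:admissiblity}.(1.). For the first part of (2.), a vertex $v_i^{(n+1)}\in S_{n+1}$ has at least one neighbor in $S_n$ (the penultimate vertex of any geodesic from $v_0$) and, by Theorem~\ref{t:admissiblity}.(2.), at most two; hence each row carries one or two nonzero entries. For the ``succeeding vertices'' claim, suppose $w=v_i^{(n+1)}$ is adjacent to two vertices $u,v\in S_n$. Exactly as in the proof of Theorem~\ref{t:admissiblity}.(3.), the path $u\,w\,v$ is not contained in an extended edge and therefore lies on a unique bounded face $f\in\dd_F B_n$ carrying both $u$ and $v$ in $S_n$. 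Since by Theorem~\ref{t:admissiblity}.(5.) the cyclic enumeration of $S_n$ is characterised by consecutive vertices sharing a common boundary face, and $f$ is the boundary face bridged by $w$, the pair $u,v$ must be consecutive, which is the assertion.

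For (3.) the existence part is immediate: if two columns $u,v\in S_n$ share a nonzero component, i.e.\ a common neighbor $w\in S_{n+1}$, then Theorem~\ref{t:admissiblity}.(3.) supplies a further neighbor of $u$ and of $v$ in $S_{n+1}$ different from $w$, giving each column a second nonzero entry. The delicate points are that there is \emph{at most one} shared component and that the two additional neighbors occupy \emph{distinct} components; here I would invoke planarity. Assuming $u,v$ had two common neighbors $w,w'\in S_{n+1}$, the edges $uw,wv,vw',w'u$ form a simply closed curve which, by Jordan's curve theorem, bounds a disc $D$ with $v_0$ lying in exactly one of the two complementary regions. Comparing the distances to $v_0$ of the four vertices on $\dd D$ (two equal to $n$, two equal to $n+1$) with Theorem~\ref{t:admissiblity}.(2.) and (3.) forces a contradiction, and the same Jordan-curve bookkeeping rules out the two extra neighbors coinciding. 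This planarity argument is the main obstacle: in contrast to (1.) and (2.), it is not a verbatim translation of a single admissibility statement but requires locating $v_0$ relative to the enclosed disc and tracking which side carries the geodesics.
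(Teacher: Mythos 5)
Your reduction of the lemma to adjacency statements between consecutive spheres, and your use of Theorem~\ref{t:admissiblity}.(1.), (2.), (3.) and (5.) for parts (1.), (2.) and the existence half of (3.), is exactly the paper's argument (the paper's own proof is essentially the same short list of citations), and those parts are fine.

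The one step that does not close as written is your justification of ``at most one shared component'' in (3.). After drawing the Jordan curve through $u,w,v,w'$ you assert that ``comparing the distances \dots with Theorem~\ref{t:admissiblity}.(2.) and (3.) forces a contradiction.'' It does not: the candidate configuration --- a quadrilateral face with boundary walk $u\,w\,v\,w'$, with $u,v\in S_n$ opposite and $w,w'\in S_{n+1}$ opposite --- satisfies (2.) pointwise (each of $w,w'$ meets exactly two vertices of $S_n$) and (3.) pointwise (each of $u,v$ has the second common neighbour as its ``other'' neighbour in $S_{n+1}$), so no distance bookkeeping on the four boundary vertices alone can rule it out. A genuinely global ingredient is needed. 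One way to finish: first check that the disc bounded by the $4$-cycle contains no further vertices (any interior vertex would have distance exactly $n+1$ by a maximum-principle argument using Theorem~\ref{t:no_cut_locus}, and would then need a neighbour in $S_{n+2}$ by Theorem~\ref{t:admissiblity}.(1.), which cannot exist inside the disc); then take geodesics from $v_0$ to $u$ and to $v$ and join $u$ to $v$ by an arc through the interior of the quadrilateral face. The resulting simply closed curve separates $w$ from $w'$, all of its graph vertices lie at distance at most $n$ from $v_0$, and absence of cut locus forces the distance function on the bounded component to attain its maximum on that curve, contradicting $d(v_0,w)=n+1$ or $d(v_0,w')=n+1$. (To be fair, the paper is equally laconic here, saying only ``planarity and Theorem~\ref{t:admissiblity}.(3.)''; but the specific tools you name would not by themselves deliver the contradiction.) Once ``at most one common neighbour'' is in place, your observation that the two extra neighbours supplied by (3.) are then automatically distinct is correct.
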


\begin{Lemma} \label{l:E_n_injective}
\emph{Let $G $ be a   planar graph that is connected, locally finite and satisfies
$\ka_C(G)\leq 0$. Then, $E_n$ is injective for all
$n\in \N_0$.}
\begin{proof} Suppose $E_n$ is not injective, i.e., its column vectors are linearly dependent.
By the preceding lemma, $E_n$  must be of the form
$${E}_n=\left(
            \begin{array}{lllll}
               a_n(1,1)& &    & a_n(1,s_{n}) \\
               a_n(2,1)& \ddots  &  &  \\
               &  \ddots&a_n(s_{n+1}-1,s_{n}-1)   &  \\
               &   & a_n(s_{n+1},s_n-1)& a_n(s_{n+1},s_{n})\\
            \end{array}
          \right),$$
where $a_n(i,j)=a(v_i^{(n+1)},v_j^{(n)})$, $i=1,\ldots,s_{n+1}$, $j=1,\ldots, s_n$ and all other entries are zero. However, this situation is geometrically impossible by Theorem~\ref{t:admissiblity}.(4.) \end{proof}
\end{Lemma}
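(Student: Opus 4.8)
The plan is to argue by contradiction and to reduce the linear-algebraic statement to the purely combinatorial obstruction supplied by Theorem~\ref{t:admissiblity}.(4.). Assume $E_n$ is not injective and pick a non-zero $x=(x_1,\dots,x_{s_n})\in\ker E_n$; set $J:=\{j\mid x_j\neq 0\}\neq\emptyset$. The starting observation is an \emph{elimination step}: if some row $i$ of $E_n$ has exactly one non-zero entry in a column $j\in J$, then, since $x_k=0$ for $k\notin J$, the $i$-th equation reads $a(v_i^{(n+1)},v_j^{(n)})\,x_j=0$, and $a(v_i^{(n+1)},v_j^{(n)})\neq0$ forces $x_j=0$, contradicting $j\in J$. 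Thus no row of $E_n$ has exactly one non-zero entry lying in $J$. Since every row carries only one or two non-zero entries by Lemma~\ref{l:E_n}.(2.), each row that meets $J$ meets it in two columns, and by the same lemma these are two succeeding columns of the cyclic enumeration of $S_n$.

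First I would use this to pin down the support pattern completely. By Lemma~\ref{l:E_n}.(1.) every column in $J$ carries at least one non-zero entry, and by the previous step every such entry belongs to a two-entry row joining it to a cyclically adjacent column of $J$. Reading the columns of $J$ as points on the cycle $S_n$ and the two-entry rows as ``dominoes'' joining cyclically consecutive points, Lemma~\ref{l:E_n}.(3.) says that two columns are joined by at most one domino and that a joined column carries a further non-zero entry; hence every point of $J$ has domino-degree exactly two. Because $S_n$ is a single cycle (Theorem~\ref{t:admissiblity}.(5.)), if $J$ were a proper subset it would contain a column $j\in J$ with $j+1\notin J$; tracing the rows through $j$ with the help of Lemma~\ref{l:E_n}.(3.) then forces a single-entry row over $J$, the contradiction already excluded. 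Hence $J=\{1,\dots,s_n\}$ and the two-entry rows form a single cyclic chain. This is exactly the cyclic bidiagonal support pattern: every vertex of $S_n$ has precisely two neighbours in $S_{n+1}$ and every vertex of $S_{n+1}$ meeting $S_n$ has precisely two, consecutive, neighbours there.

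Finally I would translate this rigid support back into the geometry of $\dd_F B_n$. The resulting two-to-two cyclic matching between $S_n$ and $S_{n+1}$ forces the boundary faces of $B_n$ to alternate around $S_n$ between faces meeting $B_n$ in a single vertex and faces meeting $V\setminus B_n$ in a single vertex; in the cyclic enumeration $f_1,\dots,f_{2k}$ of $\dd_F B_n$ this is precisely the configuration $\av{f_{2j-1}\cap B_n}=\av{f_{2j}\cap V\setminus B_n}=1$ for all $j$, which Theorem~\ref{t:admissiblity}.(4.) declares impossible. This contradiction proves injectivity. I expect the main obstacle to be exactly this last bookkeeping step: one must check carefully, using the reordered cyclic enumerations of $S_n$ and $S_{n+1}$, that the cyclic bidiagonal adjacency pattern and the alternating single-vertex face pattern of (4.) describe the very same configuration, since everything else is a short elimination argument once the support has been identified. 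Note also that this route never needs the cyclic matrix itself to be singular; it suffices that the \emph{only} support pattern admitting a kernel vector is the one already forbidden by the geometry.
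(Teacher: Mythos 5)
Your proposal is correct and follows essentially the same route as the paper: non-injectivity together with Lemma~\ref{l:E_n} forces the cyclic bidiagonal support pattern, which is then ruled out as geometrically impossible via Theorem~\ref{t:admissiblity}.(4.). Your elimination argument merely spells out the step the paper compresses into ``by the preceding lemma, $E_n$ must be of the form\dots'', and your closing observation that one never needs the cyclic matrix itself to be singular is likewise implicit in the paper's argument.
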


\begin{proof}[Proof  of Theorem \ref{t:no_cpt_supp_ef}.]
Suppose $\ph\in c_c(V)$ is an eigenfunction of $A$ to $\lm\in\C$. Let $v_0\in V$ be such that $\ph(v_0)\neq 0$ and $k\in\N$ such that $\ph_{k-1}\not\equiv 0$ and $\ph_n\equiv 0$ for $n\geq k$.  Rewriting the eigenvalue equation $(A\ph)_k=\lm\ph_k$ on the $k$-th sphere, one has
$$E_{k-1}\ph_{k-1}=(D_k-\lm)\ph_k-E_k'\ph_{k+1}.$$
By the choice of $k$, the right hand side is equal to zero. Since $E_{k-1}$ is injective by Lemma~\ref{l:E_n_injective} and $\ph_{k-1}\not\equiv 0$, the left hand is non-zero. This is a contradiction.
\end{proof}

{\bf Acknowledgement:} The author would like to thank Daniel Lenz for suggesting some of the questions that initially inspired this research. The author is also grateful to Norbert Peyerimhoff for many helpful remarks and suggestions on an earlier version of this paper.
He acknowledges the financial support by the German Science Foundation (DFG) and the Klaus Murmann Fellowship Programme (SDW).

\end{document}